\newcommand{\excise}[1]{}
\newcommand{\isom}{\cong}
\renewcommand{\setminus}{\smallsetminus}
\renewcommand{\phi}{\varphi}
\renewcommand{\hat}{\widehat}
\renewcommand{\bar}{\overline}
\newcommand{\GG}{\mathbb{G}}
\newcommand{\PP}{\mathbb{P}}
\newcommand{\QQ}{\mathbb{Q}}
\newcommand{\ZZ}{\mathbb{Z}}
\newcommand{\shfF}{\mathscr{F}}
\newcommand{\ee}{\mathrm{e}}  
\newcommand{\eem}{\varepsilon} 
\newcommand{\kk}{k}    
\newcommand{\OO}{\mathcal{O}} 
\newcommand{\id}{\mathrm{id}}
\newcommand{\pt}{\mathrm{pt}}
\newcommand{\op}{\mathrm{op}}
\newcommand{\opk}{{\op}K}
\newcommand{\loc}{\mathrm{loc}} 
\newcommand{\perf}{\mathrm{perf}}
\newcommand{\VB}{\mathrm{vb}}
\newcommand{\td}{\mathrm{td}} 
\DeclareMathOperator{\Hom}{Hom}
\DeclareMathOperator{\Sym}{Sym}
\DeclareMathOperator{\Tor}{Tor}
\DeclareMathOperator{\PExp}{PExp}
\renewcommand{\div}{\mathrm{div}}
\newtheorem{theorem}{Theorem}[section]
\newtheorem{proposition}[theorem]{Proposition}
\newtheorem{corollary}[theorem]{Corollary}
\theoremstyle{definition}
\begin{document}

\title{Computing torus-equivariant K-theory of singular varieties}

\date{March 31, 2016}

\author{Dave Anderson}
\address{Department of Mathematics, The Ohio State University, Columbus, OH 43210}
\email{anderson.2804@math.osu.edu}
\thanks{This work was partially supported by NSF DMS-1502201.}



\maketitle

\section{Introduction}

Vector bundles on algebraic varieties are basic objects of study.  Among the many questions one can ask, some fundamental ones are these: What are the global sections of a vector bundle $E$ on a variety $X$?  How can they be computed?  Does $X$ carry any nontrivial vector bundles at all?

Somewhat more tractable than the space of global sections $H^0(X,E)$ is the Euler characteristic $\chi(X,E) := \sum (-1)^i \dim(H^i(X,E))$, which makes sense whenever these dimensions are finite---e.g., when $X$ is complete.  This function is additive on short exact sequences, so one is led to consider the {\em Grothendieck group of vector bundles},
\[
  K^\circ_\VB(X) := \left\langle [E] \,\Big|\, [E]=[E']+[E''] \text{ whenever } 0 \to E' \to E \to E'' \to 0 \right\rangle,
\]
i.e., the free abelian group on isomorphism classes of vector bundles, modulo the given relation for each short exact sequence.  The Euler characteristic thus defines a function $K^\circ_\VB(X) \to \ZZ$, when $X$ is complete.

If $X$ is nonsingular and comes with an action of an algebraic group---say, a torus $T = (\GG_m)^n$---then one can often take advantage of the group action to simplify many calculations, including Euler characteristics.  Indeed, there is a version of the Atiyah-Bott localization formula in this context: assuming for simplicity that $X$ has finitely many $T$-fixed points,
\begin{equation}\label{e.loc-intro}
  \chi_T(X,E) = \sum_{p\in X^T} \frac{ [E_p] }{ (1-[L_1(p)^*])\cdots (1-[L_d(p)^*]) }.
\end{equation}
Here, for any finite-dimensional $T$-representation $V$, $[V]$ denotes its graded character, or equivalently, its class in the representation ring $R(T)$.  In the numerator on the right-hand side, $E_p$ is the fiber of the equivariant vector bundle $E$ at the fixed point $p$; in the denominator, the $L_i$'s form a decomposition of the tangent space $T_pX$ into one-dimensional weight spaces for the $T$-action.  On the left-hand side, the equivariant Euler characteristic is defined as $\chi_T(X,E) := \sum (-1)^i [H^i(X,E)]$ in $R(T)$.  By forgetting the $T$-action and remembering only dimension, one gets a homomorphism $R(T) \to \ZZ$ which takes $\chi_T$ to $\chi$.

The localization formula \eqref{e.loc-intro} thus reduces the computation of Euler characteristics to a finite calculation, and one which is often quite easy.  As a toy example, consider $T=\GG_m$ acting on $\PP^1$ by $z\cdot [a,\,b] = [a,\,z b]$.  For $n\in \ZZ$, let us write $\ee^{nt} \in R(T)$ for the one-dimensional representation given by $z\cdot v = z^n v$, for $z\in T$.  The action on $\PP^1$ has two fixed points, $0=[1,0]$ and $\infty=[0,1]$, and it induces a natural action on the line bundle $E=\OO(1)$, so that $[E_0]=\ee^t$ and $E_\infty = \ee^0 = 1$.  The tangent spaces are $[T_0\PP^1]=\ee^t$ and $[T_\infty\PP^1] = \ee^{-t}$, so their duals are $[L(0)^*] = \ee^{-t}$ and $[L(\infty)^*]=\ee^t$.  Putting all this into the formula, we have
\begin{align*}
  \chi_T(\PP^1,\OO(1)) &= \frac{ \ee^t }{1-\ee^{-t}} + \frac{ 1 }{1-\ee^{t}} \\
                       &= 1 + \ee^t,
\end{align*}
which, taking $t\mapsto 0$, recovers the familiar fact that $\OO(1)$ has two sections (once one knows it has vanishing $H^1$).

When $X$ is singular, $K_{\VB}^\circ(X)$ may be rather complicated, even for relatively simple varieties, such as complete toric varieties.  The combinatorial structure of such varieties makes many of their invariants finite and computable, yet there are examples of (projective, three-dimensional) toric varieties $X$ such that $K_\VB^\circ(X)$ contains a copy of the ground field (and, in particular, may be uncountably generated) \cite{gubeladze}.  On the other hand, on a general (singular, non-projective) toric variety, it is not known if there are any non-trivial vector bundles at all.

The purpose of this note is to survey some basic properties and applications of {\em operational $K$-theory}, especially as applied to varieties with the action of a (split) torus.  The groups $\opk_T^\circ(X)$ (and the non-equivariant groups, $\opk^\circ(X)$) are defined rather abstractly, but they turn out to be more computable than the ``geometric'' theories $K_\perf^\circ(X)$ and $K_\VB^\circ(X)$.  Furthermore, any Euler characteristic computation, such as the one exhibited above, can be carried out operationally: there will be canonical maps $K_\VB^\circ(X) \to K_\perf^\circ(X) \to \opk^\circ(X)$, and when $X$ is complete, the Euler characteristic $\chi\colon K_\VB^\circ(X) \to \ZZ$ factors though these homomorphisms.

Operational cohomology was introduced by Fulton and MacPherson \cite{fm} to serve as a contravariant counterpart to Chow homology groups, since no other option was available.  For $K$-theory, there are already several contravariant counterparts to the ``homology'' $K$-theory of coherent sheaves.  They all map to $\opk^\circ$, so it is natural to study this theory as well.  We will also see that properties of operational $K$-theory yield applications to ordinary $K$-theory: Corollary~\ref{c.surjective} implies that the usual $K$-theory of a complete toric threefold is always nontrivial, and Proposition~\ref{p.not-surj} gives an example of a projective toric variety which has $K$-theory classes not lifting to equivariant classes.

\medskip
\noindent
{\it Acknowledgements.}  I am grateful to Michel Brion and Mahir Can for organizing the Clifford Lectures at Tulane University.  I also thank my collaborators on this project, Richard Gonzales and Sam Payne, and the referee for helpful comments on the manuscript.

\section{Some background}

All schemes will be separated and of finite type over an algebraically closed field $\kk$.  A torus $T$ has character group $M=\Hom_{\mathrm{alg. gp.}}(T,\GG_m) \isom \ZZ^n$.

\subsection{Equivariant coherent sheaves}

When $T$ acts on a scheme $X$, a $T$-action on a coherent sheaf $\shfF$ is defined as follows: writing $a\colon T \times X \to X$ for the action map, and $p\colon T\times X \to X$ for the projection, one must specify isomorphisms $a^*\shfF \isom p^*\shfF$, satisfying some natural compatibilities.  (See, e.g., \cite[\S5]{cg}.)  A coherent sheaf equipped with a $T$-action in this way is called an {\it equivariant coherent sheaf}.  A homomorphism of equivariant coherent sheaves is one that respects the actions.  We will write $(T\text{-}\mathbf{Coh}_X)$ for the resulting abelian category of $T$-equivariant coherent sheaves on $X$.

The $K$-theory of equivariant coherent sheaves, $K^T_\circ(X)$, is defined to be the Grothendieck group of $(T\text{-}\mathbf{Coh}_X)$.

\subsection{Equivariant Chow groups}

The {\it Chow group} $A_i(X)$ is defined as $i$-dimensional cycles modulo rational equivalence: $Z_i(X)$ is the free abelian group on $i$-dimensional subvarieties of $X$, and $R_i(X)$ is the subgroup generated by cycles of the form $[\div_W(f)]$, for $f$ a rational function on some $(i+1)$-dimensional subvariety $W$.  For schemes equipped with the action of an algebraic group, Edidin and Graham defined {\it equivariant Chow groups} $A^G_*(X)$ to be Chow groups of quotients constructed from Totaro's approximations to the classifying space \cite{eg-eit,totaro-eq}.  If the scheme $X$ is smooth, these groups fit together to form a graded ring under intersection product; one often uses cohomological grading and writes $A_G^*(X)$ in this case.

When the group is a torus $T$, as it will be here, Brion gave a concrete characterization of Edidin-Graham's equivariant Chow groups.  Let
\[
  \Lambda = \Lambda_T = \Sym_\ZZ^* M \isom \ZZ[t_1,\ldots,t_n];
\]
it is a basic fact that $A^T_*(\pt) = A_T^*(\pt) = \Lambda$.

\begin{theorem}[{\cite[Theorem~2.1]{brion-chow}}]
  The equivariant Chow group $A^T_*(X)$ is identified with the $\Lambda$-module generated by $[Y]$ for $Y\subseteq X$ a $T$-invariant subvariety, subject to relations $[\div_W(f)]-\lambda\cdot[W]$, for $W$ an invariant subvariety and $f$ a rational function on $W$, which is an eigenfunction of weight $\lambda \in M$.
\end{theorem}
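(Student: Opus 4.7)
The plan is to construct a $\Lambda$-linear map $\Phi\colon M \to A^T_*(X)$ from the presented module $M$ in the statement to equivariant Chow, sending each generator $[Y]$ to the class of $Y$ in $A^T_*(X)$, and then to prove $\Phi$ is bijective. The $\Lambda = A^*_T(\pt)$ action on $A^T_*(X)$ is via pullback to $X$ followed by cap product.

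For well-definedness, I need to verify the relation $[\div_W(f)] = \lambda\cdot[W]$ in $A^T_*(X)$ when $f$ is an eigenfunction of weight $\lambda$ on an invariant subvariety $W$. I would view $f$ as a $T$-equivariant rational section of $\OO_W \otimes \kk_\lambda$ (trivial underlying line bundle, twisted by the character $\lambda$). Then $[\div_W(f)] = c_1^T(\OO_W\otimes\kk_\lambda) \cap [W]$, and since this bundle is pulled back from $\pt$, equivariant functoriality identifies $c_1^T$ with $\lambda \in \Lambda$, yielding the identity. Equivalently, in Edidin--Graham's mixing model $X\times^T U$ with $U\subseteq V$, enlarging $V$ by a coordinate eigenfunction $s$ of weight $\lambda$ makes $f/s$ descend to an ordinary rational function on $W\times^T U$ whose divisor expresses the same relation.

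For surjectivity, I would show that invariant subvariety classes generate $A^T_*(X)$ as a $\Lambda$-module. Apply the right-exact localization sequence $A^T_*(Z)\to A^T_*(X)\to A^T_*(X\setminus Z)\to 0$ for closed invariant $Z\subseteq X$, iterating along a filtration of $X$ whose successive differences are single $T$-orbits. It then suffices to handle $X = T/H$ for a closed subgroup $H\subseteq T$, where $A^T_*(T/H) \isom A^H_*(\pt)$ is generated as a $\Lambda$-module by $[T/H]$ via the surjection $\Lambda \to A^H_*(\pt)$ coming from restriction of characters.

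The main obstacle will be injectivity. A relation in $A^T_*(X) = A_*(X\times^T U)$ arises from the divisor of an ordinary rational function on some subvariety of $X\times^T U$. Lifting gives a $T$-invariant rational function $\tilde g$ on a $T$-invariant subvariety $\tilde Z \subseteq X\times U$. Choosing $U$ to be an open subvariety of a $T$-representation $V = \bigoplus V_i$ that splits as a direct sum of isotypic components (so that $U/T$ is a product of projective spaces), I would use the resulting product structure to factor $\tilde g$, modulo relations already in the image of $M$, as a product of an eigenfunction $f$ pulled back from the image $W\subseteq X$ of $\tilde Z$ and eigenfunctions pulled back from the $V_i$. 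The $V_i$-factors contribute divisors on $U/T$ that realize the corresponding weights as hyperplane classes in $\Lambda$, while the $W$-factor produces precisely a relation of the form $[\div_W(f)] - \lambda\cdot[W]$. The technical heart is showing that every such invariant rational function on every such $\tilde Z$ admits this controlled factorization.
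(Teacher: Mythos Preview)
The paper does not prove this theorem; it merely quotes Brion's result with a citation, so there is no ``paper's own proof'' to compare against.  Any comparison must therefore be to Brion's original argument.

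Your well-definedness and surjectivity steps are essentially correct and match what one finds in Brion: the first Chern class computation for $\OO_W\otimes\kk_\lambda$ is exactly the right way to see the relation, and the d\'evissage along an orbit stratification, reducing to $A^T_*(T/H)\isom A^H_*(\pt)$, is standard.

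The injectivity sketch, however, has a genuine gap beyond mere incompleteness.  You lift a subvariety of $X\times^T U$ to a $T$-invariant $\tilde Z\subseteq X\times U$ and propose to factor the invariant rational function $\tilde g$ through the projection to the image $W\subseteq X$ and through the $V_i$.  But an arbitrary $T$-invariant subvariety $\tilde Z$ need not be of the form $W\times U'$ or anything close to it: the fibers of $\tilde Z\to W$ may vary in $U$, so there is no reason for $\tilde g$ to admit such a product decomposition, even ``modulo relations already in $M$.''  Concretely, already for $X=\pt$ you are asserting that every rational equivalence in $A_*(U/T)$ can be rewritten using only coordinate hyperplanes of the $V_i$; this is true for the specific models (products of projective spaces), but it requires an argument, and for general $X$ the interaction between the $X$ and $U$ directions is the whole difficulty.

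Brion's actual proof avoids this head-on attack.  He shows that the presented $\Lambda$-module---call it $A^T_*(X)'$---itself enjoys the formal properties one expects: covariance for proper equivariant maps, a localization exact sequence $A^T_*(Z)'\to A^T_*(X)'\to A^T_*(X\setminus Z)'\to 0$ for invariant closed $Z$, and homotopy invariance for equivariant affine bundles.  Since Edidin--Graham's $A^T_*(X)$ has the same properties and the two theories agree on a single orbit, a noetherian induction over an orbit stratification forces the comparison map to be an isomorphism.  This structural approach sidesteps the factorization problem entirely; if you want to salvage your direct argument, you would need to prove the factorization claim, and the cleanest way to do that essentially reproduces Brion's homotopy-invariance step.
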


\subsection{Vector bundles and perfect complexes}\label{ss.tech}

In order to have a $K$-theory for which one knows good local-to-global properties, like Mayer-Vietoris and localization sequences, one has to work not with Grothendieck groups of vector bundles, but of perfect complexes.  A {\em perfect complex} of $\OO_X$-modules is one which is locally quasi-isomorphic to a bounded complex of vector bundles.  This notion was introduced in \cite{sga6}, and the basic properties of their $K$-theory were established in the remarkable paper \cite{tt}.  (The equivariant analogues of these foundational results have been recently developed in \cite{krishna-ravi}.)  Because it possesses the expected properties in general, while $K^\circ_\VB$ does not, one usually writes $K^\circ(X) := K^\circ_\perf(X)$.  Here we will generally preserve the subscripts for clarity---the exception being when we have restricted to quasi-projective varieties, for in this case it is known that $K_\VB = K_\perf$.  In fact, this isomorphism is known somewhat more generally, e.g., whenever $X$ admits an embedding in a smooth variety.

The question of the relationship between vector bundles and perfect complexes remains open for general schemes.  
Certainly any vector bundle is a perfect complex, and there is a natural homomorphism $K^\circ_\VB(X) \to K_\perf^\circ(X)$. 
However, local quasi-isomorphisms need not glue to a global isomorphism, so the nature of this homomorphism is generally ill understood; in fact, there are simple examples of non-separated schemes for which the two $K$-theories are distinct.  Implicit in \cite[\S3]{tt} is the fact that $K^\circ_\VB(X) \to K_\perf^\circ(X)$ is an isomorphism whenever $X$ has the {\em resolution property}: every coherent sheaf on $X$ is the quotient of a vector bundle.  This is made explicit in \cite{resolution}, where the resolution property is discussed in more detail; see also \cite{payne-vbs}, which deals with the case of toric varieties. 

The reader who is content to restrict to quasi-projective schemes may safely ignore this technical point about perfect complexes.  On the other hand, there are singular, non-projective, complete toric threefolds which, until very recently, were not known to carry any nontrivial vector bundles at all; but it follows from results we shall discuss later that $K^\circ_\perf$ is nontrivial for every such variety.  (Perling and Schr\"oer recently proved that every complete toric threefold carries vector bundles with arbitrarily large third Chern class, so in fact, they have many nontrivial vector bundles \cite{ps}.)

The takeaway of this remark is the following: the main results to be described here do not directly imply anything about vector bundles on a scheme, except in cases where vector bundles are known to exist for independent reasons.

\section{Bivariant theories}

To provide a framework for analyzing and proving Riemann-Roch type theorems, Fulton and MacPherson introduced the notion of a {\em bivariant theory}.  Fix a class of fiber squares of schemes, called ``independent squares'', which includes all squares where one side is the identity.  To each morphism of schemes $X \to Y$, the theory assigns a graded abelian group $B(X \to Y)$, along with three basic operations:
\begin{enumerate}
\item Given a composition of morphisms $X \to Y \to Z$, there is a {\em product} homomorphism
\[
   B(X\to Y) \otimes B(Y \to Z) \xrightarrow{\cdot} B(X \to Z).
\]
\item If $f\colon X\to Y$ is proper, and $g\colon Y \to Z$ is any morphism, there is a {\em pushforward} homomorphism
\[
  B(X \to Z) \xrightarrow{f_*} B(Y \to Z).
\]
\item For any morphism $g\colon Y' \to Y$ such that the resulting fiber square
\begin{diagram}
  X' & \rTo^{f'} & Y' \\
 \dTo &         & \dTo_g \\
 X  &  \rTo^f   & Y
\end{diagram}
is independent, there is a {\em pullback} homomorphism
\[
  g^*\colon B(X \to Y) \to B(X' \to Y').
\]
\end{enumerate}
Particular cases are the associated ``homology'' theory $B_*(X) = B(X \to \pt)$, which is covariant (via pushforward) for proper morphisms; and the associated ``cohomology'' theory, $B^*(X) = B(\id \colon X \to X)$, which is contravariant (via pullback) for arbitrary morphisms, and is a ring under the product operation. 
The three operations are required to satisfy a number of axioms, which we will not list here; for complete descriptions, see the original \cite{fm} or the summaries in \cite{ap,gk}.

Elements of a bivariant group $B(X \to Y)$ can be understood as generalized Gysin homomorphisms, and from this point of view, the axioms model the usual behavior of Gysin maps.  Indeed, $\alpha \in B(f\colon X \to Y)$ defines a ``wrong-way'' Gysin pullback $f^\alpha\colon B_*(Y) \to B_*(X)$, by
\[
  f^\alpha(y) = \alpha\cdot y ,
\]
and if $f$ is proper, also a Gysin pushforward $f_\alpha\colon B^*(X) \to B^*(Y)$, by
\[
 f_\alpha(x) = f_*( x\cdot \alpha ).
\]

There are three main examples of bivariant theories that play a role in the present story: operational Chow theory, operational $K$-theory, and bivariant $K$-theory of perfect complexes.

\subsection*{Relatively perfect complexes}

Suppose $X$ and $Y$ are quasi-projective.  Then any morphism $f\colon X \to Y$ factors as a closed embedding $\iota\colon X \hookrightarrow P$, followed by a smooth projection $p\colon P\to Y$.  An {\em $f$-perfect complex} of sheaves on $X$ is $\shfF^\bullet$ such that $\iota_*\shfF^\bullet$ is quasi-isomorphic to a bounded complex of vector bundles on $P$.  Defining $K_\perf^\circ(X\to Y)$ to be the Grothendieck group of $f$-perfect complexes on $X$ yields a bivariant theory, whose independent squares are {\em Tor-independent} \cite{fm}.  (A fiber square
\begin{diagram}
  X' & \rTo & Y' \\
 \dTo &  & \dTo \\
 X & \rTo & Y
\end{diagram}
is Tor-independent if $\Tor^Y_i(\OO_X,\OO_{Y'})=0$ for all $i>0$.)

\subsection*{Operational theories}

Looking for a cohomological counterpart to Chow homology, Fulton and MacPherson defined an {\em operational bivariant theory} by imitating the relationship between singular (Borel-Moore) homology and cohomology in topology.  In topology, if $g\colon Y \to X$ is any continuous map, then an element $c\in H^i(X)$ acts as a homomorphism $H_j(Y) \to H_{j-i}(Y)$, sending $\alpha\in H_j(Y)$ to $g^*(c) \cap \alpha$.  Proceeding backward from this property, for a scheme $X$, a class in operational Chow cohomology $c\in A^i(X)$ is defined to be a collection of homomorphisms $c_g \colon A_j(Y) \to A_{j-i}(Y)$, one for each morphism $g\colon Y \to X$; these are required to satisfy some basic compatibilities (projection formula, etc.), modelled on the topological situation.

For the rest of this section, we will work with the category of schemes with $T$-action (``$T$-schemes'') and equivariant morphisms.  The definitions and properties of operational Chow and $K$-theory are quite similar, so we will focus on the latter.  The purpose here is to give a general impression of these bivariant theories; complete definitions can be found in \cite{ap}.

The independent squares for operational $K$-theory are all fiber squares.  A class $c\in \opk_T^\circ(X \to Y)$ is a collection of homomorphisms $c_g \colon K^T_\circ(Y') \to K^T_\circ(X')$, one for each morphism $g\colon Y' \to Y$ (with $X' = X \times_Y Y'$).  Product is given by composition of homomorphisms.  In addition to compatibility with pushforward and pullback, we require that classes in $\opk_T^\circ(X\to Y)$ {\em commute} with Gysin pullbacks for flat maps and regular embeddings.

To see what this means, suppose $h\colon W \to Z$ is a regular embedding, and $Y' \to Z$ is any morphism, with $Y'' = Y' \times_Z W$.  One can define a homomorphism $h^!\colon K^T_\circ(Y') \to K^T_\circ(Y'')$ by sending $[\shfF]$ to $\sum (-1)^i [\Tor_i^{\OO_Z}(\shfF,\OO_W)]$, a finite sum since for regular embeddings $\Tor^i$ vanishes for large enough $i$.  When $h$ is flat, it is even easier to define such a homomorphism: all higher Tor vanishes, so $h^![\shfF] =[\shfF\otimes_{\OO_Z}\OO_W]$.  Commuting with Gysin pullbacks means that in Figure~\ref{f.diags}, the diagram of fiber squares on the left produces a commutative diagram on the right.

\begin{figure}
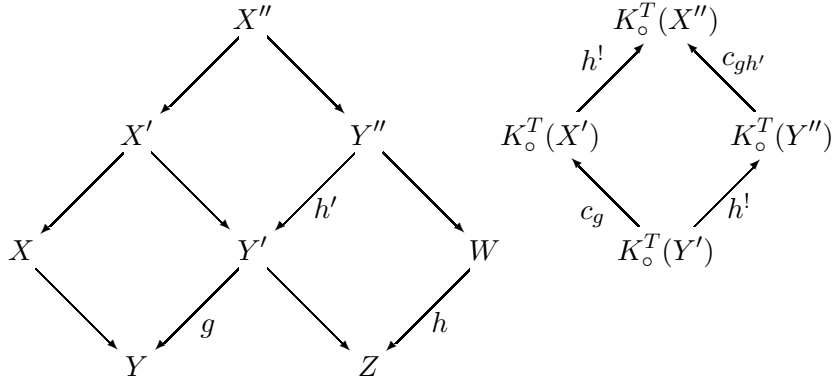

\begin{diagram}
   &   &   &       &  X''     &  &  & & & &  &  K^T_\circ(X'') \\
   &   &   & \ldTo &    & \rdTo  &  & & & & \ruTo^{h^!} &  & \luTo^{c_{gh'}} \\
   &   & X' &     &     &    & Y''  & &  & K^T_\circ(X') & & & & K^T_\circ(Y'') \\
   & \ldTo & & \rdTo &  & \ldTo_{h'} & & \rdTo &  & & \luTo_{c_g} & & \ruTo_{h^!} \\
 X &   &    &     & Y'  &       &  &   & W &  & & K^T_\circ(Y') \\
  & \rdTo & & \ldTo_g &  & \rdTo  & & \ldTo_h \\
  &     & Y &      &   &      & Z  
\end{diagram}
\caption{Commuting with Gysin pullback, for $h$ flat or regular embedding.\label{f.diags}}
\end{figure}

If $X$ is an $n$-dimensional variety, there are always homomorphisms $A_T^i(X) \to A^T_{n-i}(X)$ and $\opk_T^\circ(X) \to K^T_\circ(X)$, sending an operator to its value on $[X]$ or $[\OO_X]$, respectively.  An important property of operational groups is that these are ``Poincar\'e isomorphisms'' when $X$ is smooth:

\begin{proposition}[{\cite[Proposition~17.4.2]{fulton-it},\cite[Proposition~4]{eg-eit},\cite[Proposition~4.3]{ap}}] \label{p.poincare}
Suppose $X$ is smooth.  The natural homomorphisms $A_T^i(X) \to A^T_{n-i}(X)$ and $\opk_T^\circ(X) \to K^T_\circ(X)$ are isomorphisms.
\end{proposition}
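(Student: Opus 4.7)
The plan is to construct an explicit section of the evaluation map $\epsilon\colon \opk_T^\circ(X) \to K^T_\circ(X)$, $c\mapsto c_{\id}([\OO_X])$, and then show that $\epsilon$ is injective via a Gysin computation combined with devissage. I focus on the $K$-theoretic statement; the Chow version runs in parallel, with refined Gysin pullback in place of derived tensor product.

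To construct the section, note that smoothness of $X$ forces every equivariant coherent sheaf to admit a bounded equivariant locally-free resolution, so $K^T_\circ(X)$ is generated by classes $[E]$ of equivariant vector bundles. Define $\psi([E])\in\opk_T^\circ(X)$ by letting its value at $g\colon Y'\to X$ be the endomorphism $[\shfG]\mapsto[g^*E\otimes_{\OO_{Y'}}\shfG]$ of $K^T_\circ(Y')$. Exactness of tensoring with the locally free sheaf $g^*E$ makes this compatible with all the bivariant axioms, and relations from short exact sequences of vector bundles descend since pullback-then-tensor preserves exactness. One checks immediately that $\epsilon\circ\psi=\id$, so $\psi$ is a section.

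For injectivity of $\epsilon$, suppose $c'\in \opk_T^\circ(X)$ satisfies $\epsilon(c')=0$; the aim is $c'_g([\shfG])=0$ for every $g\colon W\to X$ and $[\shfG]\in K^T_\circ(W)$. First I would prove $c'_g([\OO_W])=0$ via the graph factorization $g=\pi\circ\Gamma_g$, where $\pi\colon W\times X\to X$ is the flat second projection and $\Gamma_g\colon W\hookrightarrow W\times X$ is the graph. Smoothness of $X$ makes the diagonal $\delta\colon X\hookrightarrow X\times X$ a regular embedding, hence so is its base change $\Gamma_g$. Compatibility of $c'$ with flat pullback along $\pi$ gives $c'_\pi([\OO_{W\times X}])=\pi^*(c'_{\id}([\OO_X]))=0$, and the commutation with the Gysin pullback $\Gamma_g^!$ diagrammed in Figure~\ref{f.diags} then yields
\[
c'_g([\OO_W]) \;=\; c'_g(\Gamma_g^!([\OO_{W\times X}])) \;=\; \Gamma_g^!(c'_\pi([\OO_{W\times X}])) \;=\; 0.
\]
The general case follows by devissage: $K^T_\circ(W)$ is generated by classes $\iota_*[L]$, where $\iota\colon Z\hookrightarrow W$ is the inclusion of a closed $T$-invariant subvariety and $L$ is an equivariant line bundle on $Z$. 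Proper-pushforward compatibility of $c'$ gives $c'_g(\iota_*[L])=\iota_*(c'_{g\iota}[L])$, and the module identity $c'_{g\iota}[L]=[L]\cdot c'_{g\iota}[\OO_Z]$ (coming from the commutativity of $\opk_T^\circ(Z)$ applied to the pullback of $c'$ and to $\psi_Z([L])$) reduces this to the vanishing established in the previous paragraph.

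The crux is the Gysin-plus-flat-pullback identity, which pins any operational class $c$ down to its value on $[\OO_X]$ and uses smoothness of $X$ in an essential way: $\delta$ must be a regular embedding for the commutation in Figure~\ref{f.diags} to apply. Once vanishing on $[\OO_W]$ is established, devissage and proper-pushforward compatibility immediately extend the vanishing to all of $K^T_\circ(W)$ for arbitrary (possibly singular) $W$. The Chow analog is \cite[Proposition~17.4.2]{fulton-it}, extended equivariantly in \cite[Proposition~4]{eg-eit}; the argument is formally identical, with $\cap$-product and refined Gysin pullback replacing derived tensor product.
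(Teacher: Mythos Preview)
The paper does not actually prove this proposition; it simply records the statement and cites \cite[Proposition~17.4.2]{fulton-it}, \cite[Proposition~4]{eg-eit}, and \cite[Proposition~4.3]{ap}.  Your argument is the standard one carried out in those references: build a section of $\epsilon$ by tensoring with locally free resolutions (using smoothness to resolve coherent sheaves by vector bundles), and prove injectivity via the graph factorization $g=\pi\circ\Gamma_g$ (using smoothness so that the diagonal, hence $\Gamma_g$, is a regular embedding and the commutation axiom in Figure~\ref{f.diags} applies).  So your approach matches what the cited sources do.

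One point deserves tightening.  In the d\'evissage step you invoke ``commutativity of $\opk_T^\circ(Z)$'' to justify $c'_{g\iota}[L]=[L]\cdot c'_{g\iota}[\OO_Z]$.  For singular $Z$ the commutativity of the full operational ring is not part of the axioms and is not obvious a priori; what you actually need (and what holds) is that operators arising from perfect complexes---in particular the operator ``tensor with the line bundle $L$''---are \emph{central} in $\opk_T^\circ(Z)$, since derived tensor product commutes with proper pushforward (projection formula), flat pullback, and refined Gysin maps.  An even cleaner route avoids line bundles altogether: d\'evissage plus the localization sequence show that $K^T_\circ(W)$ is generated as an $R(T)$-module by classes $[\OO_Z]$ for $T$-invariant integral closed subschemes $Z\subseteq W$.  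Then proper-pushforward compatibility and the vanishing $c'_{g\iota}([\OO_Z])=0$ already established in your graph-factorization paragraph (applied with $W$ replaced by $Z$) finish the argument, using only $R(T)$-linearity of the operators $c'_g$.
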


More generally, if $f\colon X \to Y$ is any morphism, and $g\colon Y \to Z$ is smooth, then there is a distinguished ``orientation'' class $[g]\in \opk_T^\circ(Y \to Z)$ giving rise to a natural isomorphism
\[
  \opk_T^\circ(X \to Y) \xrightarrow{\cdot [g]} \opk_T^\circ(X \to Z).
\]
(Taking $X = Y$, $Z=\pt$, and $f=\id$, one recovers Proposition~\ref{p.poincare}.)  A similar statement holds for bivariant Chow theory.

When $X$ is complete, composing with the equivariant Euler characteristic defines a homomorphism
\begin{align*}
  \opk_T^\circ(X) &\to \Hom_{R(T)}(K^T_\circ(X), R(T)), \\
       c &\mapsto (\alpha \mapsto \chi_T( c_{\id}(\alpha) ) ).
\end{align*}
In general, this is neither injective nor surjective.  However, when $X$ is a {\em $T$-linear variety}---a class which includes toric varieties, spherical varieties, and Schubert varieties---it is an isomorphism.  This is an echo of similar statements for Chow cohomology (\cite{fmss,totaro}).

\begin{theorem}[{\cite[Theorem~6.1]{ap}}]\label{t.kronecker}
For a complete $T$-linear variety $X$, we have $\opk_T^\circ(X) \isom \Hom_{R(T)}(K^T_\circ(X), R(T))$.
\end{theorem}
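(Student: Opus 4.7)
The plan is to prove the natural map
\[
\phi_X\colon \opk_T^\circ(X) \to \Hom_{R(T)}(K^T_\circ(X), R(T)),\quad c \mapsto \bigl(\alpha \mapsto \chi_T(c_{\id}(\alpha))\bigr)
\]
is an isomorphism by induction on the length of a closed $T$-invariant filtration $\emptyset = X_0 \subset X_1 \subset \cdots \subset X_n = X$ whose successive open strata $U_i = X_i \setminus X_{i-1}$ are linear representations of $T$; such a filtration exists by the very definition of ``$T$-linear variety,'' and each $X_i$ is itself complete and $T$-linear, so the inductive hypothesis applies at every stage.

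For the inductive step, put $Z = X_{n-1}$ and $U = U_n \cong \AA^d$ (with linear $T$-action). On the right-hand side, applying the left-exact functor $\Hom_{R(T)}(-, R(T))$ to the excision sequence of equivariant coherent $K$-theory,
\[
K^T_\circ(Z) \to K^T_\circ(X) \to K^T_\circ(U) \to 0,
\]
yields a left-exact sequence relating the three target groups. On the left-hand side, I would construct a compatible exact sequence in operational $K$-theory along the lines of Kimura's descent/Mayer--Vietoris for operational Chow cohomology: an operational class $c$ on $X$ should be determined by its restrictions to $Z$ and to $U$, subject to compatibility conditions that mirror the dualized excision relations. The base case is a linear $T$-representation (or a point), for which both sides equal $R(T)$ by homotopy invariance, and $\phi$ is the identity on $R(T)$. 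A four-lemma diagram chase then propagates the isomorphism from $\phi_Z$ (inductive hypothesis) and $\phi_U$ (base case) to $\phi_X$, using naturality of $\phi$ under pullback.

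The main obstacle is constructing the compatible exact sequence on the operational side and verifying that it matches the dualized excision sequence term by term. This is intrinsic to operational theories: a class $c = (c_g)$ is an entire natural family of Gysin-type homomorphisms indexed by all maps into $X$, so producing a class on $X$ from compatible data on $Z$ and $U$ requires reconstructing $c_g$ for every morphism $g\colon Y' \to X$ by gluing the corresponding operators on the preimages of $Z$ and $U$, in a way that respects the axiomatic compatibility with Gysin pullbacks along flat maps and regular embeddings. This gluing is the technical heart of the argument (\cite{ap}); once it is in place, the induction is a formal diagram chase.
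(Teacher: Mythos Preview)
Your inductive strategy has two genuine gaps, and it diverges from the approach actually taken in \cite{ap}.

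First, the Kronecker map $\phi_U$ is not defined for the open strata. The map $\phi_X(c)(\alpha) = \chi_T(c_{\id}(\alpha))$ uses the equivariant Euler characteristic, which requires $X$ to be complete. Your strata $U_i \cong \AA^d$ are affine, so there is no $\phi_{U_i}$ in the sense you need; the ``base case'' as written is not a case of the theorem at all. You could try to redefine $\phi$ on $U$ ad hoc via $K^T_\circ(\AA^d)\cong R(T)$, but then compatibility of this with $\phi_X$ and $\phi_Z$ in a commuting ladder is exactly what is at stake and cannot be assumed.

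Second, even granting some left-exact sequence on the operational side, the diagram chase does not close. Dualizing the excision sequence
\[
K^T_\circ(Z) \to K^T_\circ(X) \to K^T_\circ(U) \to 0
\]
yields only a left-exact three-term sequence on the $\Hom$ side, with no control over the image of the map into $\Hom_{R(T)}(K^T_\circ(Z),R(T))$. With isomorphisms at the $U$- and $Z$-positions you can deduce that $\phi_X$ is injective, but not that it is surjective. Moreover, Kimura's sequences are descent sequences for \emph{envelopes} and abstract blowups, not for open/closed decompositions; there is no natural map $\opk_T^\circ(Z)\to\opk_T^\circ(X)$ or $\opk_T^\circ(U)\to\opk_T^\circ(X)$ to produce the sequence you are gesturing at, and the gluing you describe does not appear in \cite{ap}.

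The argument in \cite{ap} follows the template of \cite{fmss} and \cite{totaro}: one first proves, by induction on the linear filtration, a K\"unneth-type statement that the exterior product
\[
K^T_\circ(X)\otimes_{R(T)} K^T_\circ(Y) \longrightarrow K^T_\circ(X\times Y)
\]
is surjective for $X$ $T$-linear and arbitrary $Y$. This induction lives entirely on the covariant side, where excision for $K^T_\circ$ is available and well-behaved. Kronecker duality is then deduced from K\"unneth via the graph: for any $g\colon Y\to X$, the class $[\OO_{\Gamma_g}]\in K^T_\circ(Y\times X)$ can be written as a sum of exterior products, and this lets one both reconstruct an operational class from a functional on $K^T_\circ(X)$ and show that an operational class is determined by that functional. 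The induction you propose is on the wrong (contravariant) side of the theory.
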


\noindent
In particular, since $K^T_\circ(X)$ is a finitely generated $R(T)$-module for such varieties, so is $\opk_T^\circ(X)$.  This stands in contrast to $K^\circ(X)$, which may contain a copy of the base field, and thus be uncountably generated \cite{gubeladze}.

\section{Kimura's exact sequences and the Kan extension property}

The definitions of operational Chow and $K$-theory may appear unwieldy: a class in $\opk_T^\circ(X)$ is a collection of compatible endomorphisms of $K^T_\circ(Y)$, as $Y$ ranges over all schemes mapping equivariantly to $X$.  However, the main tool for computing---introduced for (non-equivariant) Chow theory by Kimura and developed for equivariant $K$-theory in \cite{ap}---turns out to be quite effective, especially when one has access to resolution of singularities.

An {\em equivariant envelope} is a $T$-equivariant proper map $X' \to X$ such that every invariant subvariety of $X$ is the birational image of an invariant subvariety of $X'$.

\begin{proposition}[{\cite[Theorem~2.3]{kimura},\cite[Proposition~5.3]{ap}}]
Let $X' \to X$ be an equivariant envelope.  The {\em first Kimura sequences}
\begin{align}
 & 0 \to A_T^*(X) \to A_T^*(X') \to A^*(X'\times_X X') &  \text{and}\\ 
 & 0 \to \opk_T^\circ(X) \to \opk_T^\circ(X') \to \opk_T^\circ(X'\times_X X')
\end{align}
are exact.
\end{proposition}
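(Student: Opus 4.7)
The two sequences are formally parallel, so I focus on the $K$-theoretic one; the Chow argument is analogous. My strategy is to reduce the exactness statement about operational classes---which are, by definition, compatible families of endomorphisms of equivariant $K$-homology---to a dual ``homology'' Kimura sequence for $K^T_\circ$:
\begin{equation*}
  K^T_\circ(X'\times_X X') \xrightarrow{(q_1)_* - (q_2)_*} K^T_\circ(X') \xrightarrow{p_*} K^T_\circ(X) \to 0,
\end{equation*}
together with the analogous exactness of its base change along every equivariant $g\colon Y\to X$. The latter makes sense because the fiber product $Y'=X'\times_X Y\to Y$ of an envelope is again an envelope.

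The main technical input, and the step I expect to be the principal obstacle, is establishing this homology sequence. Surjectivity of $p_*$ should fall to Noetherian induction on $T$-invariant closed subvarieties: given $V\subseteq X$, the envelope hypothesis produces an invariant $V'\subseteq X'$ mapping birationally onto $V$, so $p_*[\OO_{V'}]$ agrees with $[\OO_V]$ modulo classes supported on a proper invariant subscheme, where induction applies (after filtering by relations coming from rational $T$-eigenfunctions). The description of $\ker(p_*)$ as the image of $(q_1)_* - (q_2)_*$ is the harder part. I would follow Kimura's approach in Chow homology, adapted to the equivariant $K$-theoretic setting as in \cite{ap}: filter classes by dimension of support, reduce to the generically finite situation over each invariant subvariety, and realise a class killed by $p_*$ as a pushforward difference from $X'\times_X X'$ by analyzing the inertia over a common dense open.

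Granting the homology sequence, injectivity of $p^*\colon \opk_T^\circ(X)\to \opk_T^\circ(X')$ is essentially formal. Given $c$ with $p^*c=0$, any $g\colon Y\to X$, and any $\alpha\in K^T_\circ(Y)$, lift $\alpha=(p_Y)_*\beta$ through the envelope $p_Y\colon Y'\to Y$ and let $p'\colon Y'\to X'$ be the natural map, so that $p\circ p' = g\circ p_Y$. The pushforward compatibility of $c$ and the defining formula $(p^*c)_{p'} = c_{p\circ p'}$ give
\begin{equation*}
  c_g(\alpha) = (p_Y)_*\bigl(c_{g\circ p_Y}(\beta)\bigr) = (p_Y)_*\bigl((p^*c)_{p'}(\beta)\bigr) = 0.
\end{equation*}

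For exactness in the middle, suppose $c'\in\opk_T^\circ(X')$ satisfies $p_1^*c' = p_2^*c'$. For $g\colon Y\to X$ I would define $c_g$ by choosing lifts: set $c_g(\alpha) := (p_Y)_*(c'_{p'}\beta)$ whenever $\alpha=(p_Y)_*\beta$. The crucial well-definedness is exactly where the kernel description is needed. Any two lifts of $\alpha$ differ by a class of the form $(r_1)_*\gamma - (r_2)_*\gamma$ with $\gamma\in K^T_\circ(Y'\times_Y Y')$; writing $h\colon Y'\times_Y Y' \to X'\times_X X'$ for the natural map and noting both $p_Y\circ r_1 = p_Y\circ r_2$ and $p'\circ r_i = p_i\circ h$, the two contributions become $(p_Y r_1)_*\bigl((p_1^*c')_h\gamma\bigr)$ and $(p_Y r_2)_*\bigl((p_2^*c')_h\gamma\bigr)$, which coincide by hypothesis. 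The remaining operational axioms---compatibility with proper pushforward, arbitrary pullback, and commutativity with the Gysin maps for flat morphisms and regular embeddings---would be verified by the same lift-and-push technique, using the stability of envelopes under base change. Finally, $p^*c = c'$ is checked by lifting classes on $X'$ via the diagonal $\Delta\colon X'\to X'\times_X X'$, for which the defining formula collapses to $c'_{\id}$.
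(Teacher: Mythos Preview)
Your proposal is correct and follows the same line as the paper: the key ingredient singled out there is precisely the homology exact sequence $K^T_\circ(X'\times_X X') \to K^T_\circ(X') \to K^T_\circ(X) \to 0$ (and its Chow analogue), established non-equivariantly by Kimura and by Fulton--Gillet and extended equivariantly in \cite[Appendix]{ap}, after which the operational exactness is deduced by exactly the lift-and-push argument you describe. One small sharpening: to verify $p^*c = c'$ you must check the action through every $g'\colon Z \to X'$, not just $g'=\id_{X'}$; this is done by the same trick, using the section $(g',\id_Z)\colon Z \to X'\times_X Z$, which specializes to your diagonal when $g'=\id_{X'}$.
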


\noindent
The key ingredients in the proof are the corresponding exact sequences for homology theories,
\begin{align*}
 & A^T_*(X' \times_X X') \to A^T_*(X') \to A^T_*(X) \to 0 & \text{and} \\
 & K^T_\circ(X' \times_X X') \to K^T_\circ(X') \to K^T_\circ(X) \to 0.
\end{align*}
These were established (in the nonequivariant case) by Kimura \cite{kimura} and Fulton-Gillet \cite{fg,gillet}, respectively, and extended to the equivariant setting in \cite[Appendix]{ap}.

When the envelope is birational, a more precise statement is available.  An {\em abstract blowup diagram} is a fiber square
\begin{diagram}
 E & \rInto & X' \\
\dTo &  & \dTo \\
 S & \rInto & X,
\end{diagram}
with $X' \to X$ proper, $S\subseteq X$ closed, and $X'\setminus E \to X \setminus S$ an isomorphism.

\begin{proposition}[{\cite[Theorem~3.1]{kimura},\cite[Proposition~5.4]{ap}}]
Given an abstract blowup square as above, suppose additionally that $X' \to X$ is an equivariant envelope.  Then the {\em second Kimura sequences}
\begin{align}
 & 0 \to A_T^*(X) \to A_T^*(X') \oplus A_T^*(S) \to A_T^*(E) &  \text{and} \\
 & 0 \to \opk_T^\circ(X) \to \opk_T^\circ(X') \oplus \opk_T^\circ(S) \to \opk_T^\circ(E) \label{e.kimura2k}
\end{align}
are exact.
\end{proposition}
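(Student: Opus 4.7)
My plan is to imitate Kimura's original argument for bivariant Chow theory, the key input being the equivariant $K$-homology analogue
\[
K^T_\circ(E) \xrightarrow{(j_*,-q_*)} K^T_\circ(X') \oplus K^T_\circ(S) \xrightarrow{p_*+i_*} K^T_\circ(X) \to 0
\]
of the second Kimura sequence (with $p, i, j, q$ the maps of the blowup square). Base-changing the square along any $g \colon Y \to X$ preserves its defining properties, so the same exact sequence is available for the base-changed data $(E_Y, S_Y, X'_Y, Y)$. Injectivity of \eqref{e.kimura2k} at $\opk_T^\circ(X)$ is then immediate from the first Kimura sequence: any $c$ with $p^*c = 0$ already vanishes.

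For exactness at the middle, I will take $(c', d)$ with $j^*c' = q^*d$ and build a preimage $c \in \opk_T^\circ(X)$. The idea is that, for each $g \colon Y \to X$, the homology sequence lets me lift an arbitrary $\alpha \in K^T_\circ(Y)$ to some $(\alpha', \beta) \in K^T_\circ(X'_Y) \oplus K^T_\circ(S_Y)$, and I define
\[
c_g(\alpha) := p_{Y*}\bigl(c'_{g'}(\alpha')\bigr) + i_{Y*}\bigl(d_{g''}(\beta)\bigr),
\]
where $g'\colon X'_Y \to X'$, $g''\colon S_Y \to S$, and (for later) $g'''\colon E_Y \to E$ are the base-change projections. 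Two lifts differ by an element from $K^T_\circ(E_Y)$; using pushforward compatibility of $c'$ and $d$ along the proper maps $j_Y$ and $q_Y$, together with the identities $g' \circ j_Y = j \circ g'''$ and $g'' \circ q_Y = q \circ g'''$, the induced change in $c_g(\alpha)$ collapses to a difference of the form $(p_Y j_Y)_* \delta - (i_Y q_Y)_* \delta$, where $\delta \in K^T_\circ(E_Y)$ is a common value arising from the hypothesis $j^*c' = q^*d$. This vanishes because $p_Y \circ j_Y = i_Y \circ q_Y$, so $c_g$ is well defined.

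The next step is to verify that the collection $\{c_g\}$ satisfies the axioms of an operational class---compatibility with proper pushforward, flat pullback, and regular-embedding Gysin pullback. Each axiom reduces, by base-changing the blowup square along the relevant map and choosing compatible lifts, to the same axiom for $c'$ and $d$; this is long but routine. Finally, the identities $p^*c = c'$ and $i^*c = d$ will follow by choosing tailored lifts: for $g = p \circ h$ with $h \colon Y' \to X'$, the morphism $h$ yields a section $\sigma \colon Y' \to X'_{Y'}$ of $p_{Y'}$ (a closed embedding since $p_{Y'}$ is separated), and lifting $\alpha' \in K^T_\circ(Y')$ as $(\sigma_* \alpha', 0)$ collapses $c_g(\alpha')$ to $c'_h(\alpha')$ via pushforward compatibility of $c'$ along $\sigma$ and the identity $g' \circ \sigma = h$; the argument for $i^*c = d$ is symmetric using a section $\tau$ of $i_{Y'}$. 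I expect the principal obstacle to be bookkeeping rather than conceptual: each step chases through two or three nested base-changed blowup squares, and the burden is keeping straight which projections, sections, and commuting identities are invoked where. No new ingredient beyond the equivariant homology envelope sequence is required.
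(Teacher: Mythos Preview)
Your proposal is correct and follows the same strategy as Kimura's original argument and its $K$-theoretic adaptation in \cite{ap}; the survey paper itself does not supply a proof, only the citations. One point worth making explicit: you take the blowup homology sequence
\[
K^T_\circ(E) \xrightarrow{(j_*,-q_*)} K^T_\circ(X') \oplus K^T_\circ(S) \xrightarrow{p_*+i_*} K^T_\circ(X) \to 0
\]
as given input, but the paper only records the \emph{envelope} homology sequence $K^T_\circ(X'\times_X X') \to K^T_\circ(X') \to K^T_\circ(X) \to 0$. Exactness of your sequence at the middle term is not entirely formal---if $p_*\alpha' + i_*\beta = 0$, localization gives $\alpha' = j_*\gamma$, but one then needs to adjust $\gamma$ by a boundary class from $K_1^T(X\setminus S)$ to arrange $q_*\gamma = -\beta$; this uses compatibility of the localization long exact sequences for $(E,X')$ and $(S,X)$ under proper pushforward. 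This is standard, but since you call it ``the key input'' you should either cite it or sketch this derivation. Everything else---well-definedness via the hypothesis $j^*c' = q^*d$, the axiom checks, and the section trick for $p^*c = c'$---is exactly as in the references.
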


Combined with the ``Poincar\'e isomorphisms'' of Proposition~\ref{p.poincare}, the second Kimura sequence allows computations of operational theories to be carried out by reduction to the smooth case.  
A useful consequence is the {\em Kan extension} property.  In a precise sense, $\opk_T^\circ(X)$ is the universal target for contravariant ``cohomology'' theories on schemes, which when restricted to smooth schemes, agree with $K_T^\circ(X)$.  Regarding $K_T^\circ$ as a functor
\[
  K_T^\circ \colon (T\text{-}\mathbf{Sm})^{\mathrm{op}} \to (R(T)\text{-}\mathbf{Mod}),
\]
this is the statement that equivariant operational $K$-theory is the (right) Kan extension of $K_T^\circ$ along the inclusion $(T\text{-}\mathbf{Sm})^{\mathrm{op}} \hookrightarrow (T\text{-}\mathbf{Sch})^{\mathrm{op}}$ of smooth schemes in all schemes.  (This is a basic notion in category theory; in fact, most familiar constructions---limits, colimits, etc.---can be realized as certain Kan extensions.  See \cite{cwm}.)  Explicitly:

\begin{proposition}[{\cite[Theorem~5.8]{ap}}]
Assume the base field has characteristic zero.  Let $L_T^\circ$ be any contravariant functor from $T$-schemes to $R(T)$-modules, whose restriction to smooth schemes admits a natural transformation to $K_T^\circ$.  Then there is a unique extension of this transformation to a transformation $\eta\colon L_T^\circ \to \opk_T^\circ$.
\end{proposition}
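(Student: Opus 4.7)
The plan is to build $\eta$ by induction on $\dim X$, using equivariant resolution of singularities (available since $\mathrm{char}(\kk) = 0$) together with the second Kimura sequence \eqref{e.kimura2k} and the Poincar\'e isomorphism of Proposition~\ref{p.poincare}. In the base case, when $X$ is smooth, $\opk_T^\circ(X) \isom K_T^\circ(X)$, and we define $\eta_X\colon L_T^\circ(X) \to \opk_T^\circ(X)$ as the composition of the hypothesized natural transformation $L_T^\circ(X) \to K_T^\circ(X)$ with the inverse of this isomorphism.

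For the inductive step, suppose $X$ has dimension $d$ and is singular, and that $\eta$ has been constructed and shown to be natural on all $T$-schemes of dimension $< d$ (and, by the base case, on smooth $T$-schemes of any dimension). Choose an equivariant resolution $\pi\colon X' \to X$ obtained as a sequence of blowups along smooth invariant centers, let $S \subsetneq X$ be the singular locus, and set $E = \pi^{-1}(S)$. Then $X'$ is smooth of dimension $d$, while $\dim S, \dim E < d$, so $\eta_{X'}$, $\eta_S$, and $\eta_E$ are already at hand. The resulting square is an abstract blowup square with $\pi$ an equivariant envelope, so \eqref{e.kimura2k} identifies $\opk_T^\circ(X)$ with the equalizer
\[
  \{ (\gamma, \delta) \in \opk_T^\circ(X') \oplus \opk_T^\circ(S) \;:\; \gamma|_E = \delta|_E \}.
\]
Given $\alpha \in L_T^\circ(X)$, write $\alpha' \in L_T^\circ(X')$ and $\alpha_S \in L_T^\circ(S)$ for its pullbacks. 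By the inductive naturality of $\eta$, the pair $(\eta_{X'}(\alpha'), \eta_S(\alpha_S))$ has matching images in $\opk_T^\circ(E)$, and hence descends to a unique class $\eta_X(\alpha) \in \opk_T^\circ(X)$.

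Two things still need verification at each inductive step: (i) that $\eta_X(\alpha)$ is independent of the chosen resolution, and (ii) that $\eta$ is natural with respect to arbitrary equivariant morphisms $Y \to X$. For (i), any two resolutions are dominated by a common third $X''$; the two candidate values both map to the pair $(\eta_{X''}(\alpha''), \eta_S(\alpha_S))$ under the injective composite $\opk_T^\circ(X) \hookrightarrow \opk_T^\circ(X''_{i}) \oplus \opk_T^\circ(S) \to \opk_T^\circ(X'') \oplus \opk_T^\circ(S)$, so they agree. For (ii), one resolves $X$ and then, after base changing to $Y$, further resolves to land in the smooth setting, where naturality holds by hypothesis and Proposition~\ref{p.poincare}; the compatibility then propagates through the Kimura sequences of $X$ and $Y$ simultaneously. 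Uniqueness follows by the same induction: on smooth schemes $\eta$ is determined by the hypothesis, and on singular schemes the injectivity in \eqref{e.kimura2k} combined with naturality forces any extension to agree with the formula above.

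The main technical obstacle is (ii): assembling the $\eta_X$ into a transformation functorial under arbitrary equivariant pullbacks requires producing compatible equivariant resolutions of both source and target, which is where characteristic zero is used essentially, followed by a careful diagram chase that transports the smooth-case naturality through both Kimura sequences at once. Well-definedness of the construction reduces to the same kind of compatibility check, done between two resolutions of the same scheme.
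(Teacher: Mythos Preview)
The paper does not give its own proof here; it cites \cite[Theorem~5.8]{ap} and only indicates, in the surrounding discussion, that the argument proceeds by combining the second Kimura sequence \eqref{e.kimura2k} with the Poincar\'e isomorphism of Proposition~\ref{p.poincare} and equivariant resolution of singularities.  Your proposal follows exactly this template and is essentially correct.

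Two small points deserve a line of justification.  First, the claim that a resolution built as a composite of blowups along smooth invariant centers is an equivariant envelope is true but not automatic for an arbitrary proper birational map: each such blowup is an envelope because its exceptional divisor is a projective bundle over the center (hence admits a rational section over every subvariety of the center), and envelopes compose.  Second, the compatibility you need in order to descend through \eqref{e.kimura2k} is naturality for the restriction $E \to X'$, where $E$ may be singular of dimension $< d$ while $X'$ is smooth of dimension $d$; this mixed case is not literally contained in the inductive hypothesis as you phrase it.  It is, however, easily folded in: applying the Kimura sequence to $E$ itself reduces the question to naturality for maps from schemes of dimension $< \dim E$ into smooth schemes, so a secondary induction on $\dim E$ handles it.  With these two clarifications your argument is complete and matches the approach of the cited reference.
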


The same proof works, {\em mutatis mutandis}, to show that in characteristic zero (or whenever there exist suitable resolutions of singularities), equivariant operational Chow cohomology $A_T^*(X)$ is the right Kan extension of the equivariant intersection ring on smooth $T$-varieties.  
It follows that there is a natural homomorphism
\[
  ``A_T"(X) := \lim_{X \to Y} A_T^*(Y) \to A_T^*(X),
\]
where the limit in the source is taken over all maps of $X$ to smooth schemes $Y$.  (This construction was offered as a substitute for the intersection ring in \cite{fulton75}, prior to the introduction of the operational Chow ring.  In fact, $``A"(X)$ is the {\em left} Kan extension of the intersection ring, essentially by definition.)

More interestingly, defining $``K_T"(X)$ as the analogous limit, there are natural homomorphisms
\begin{equation}\label{e.ks}
  ``K_T"(X) \to K_{T,\VB}^\circ(X) \to K_{T,\perf}^\circ(X) \to KH_T^\circ(X) \to \opk_T^\circ(X),
\end{equation}
all of which are isomorphisms when $X$ is smooth.  
Here $KH_T^\circ(X)$ is (the degree-zero part of) Weibel's homotopy $K$-theory (see \cite{weibel})---or more precisely, its equivariant version, constructed very recently in \cite{krishna-ravi}.

Non-equivariant homotopy $K$-theory possesses a descent property for abstract blowup squares ({\em cdh-descent}, proved in \cite{haesemeyer}), which in particular implies that there is a natural exact sequence
\[
  KH^1(E) \to KH^0(X) \to KH^0(X') \oplus KH^0(S) \to KH^0(E).
\]
Combining this with the second Kimura sequence \eqref{e.kimura2k} and some basic facts about toric varieties, one can prove something about the rightmost map of \eqref{e.ks} in the non-equivariant case:

\begin{theorem}[{\cite[Theorem~7.1]{ap}}]\label{t.surjective}
If $X$ is a three-dimensional toric variety, then the natural homomorphism $KH^\circ(X) \to \opk^\circ(X)$ is surjective.
\end{theorem}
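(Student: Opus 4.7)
The plan is to argue by induction on dimension, combining the second Kimura sequence \eqref{e.kimura2k} for $\opk^\circ$ with the cdh-descent long exact sequence for $KH^\circ$. Choose a toric resolution $\pi\colon X' \to X$, which is automatically an envelope; let $S$ be the reduced singular locus of $X$ and $E = \pi^{-1}(S)_{\mathrm{red}}$, so $\dim S, \dim E \leq 2$. The resulting abstract blowup square produces a commutative diagram
\[
\begin{array}{ccccccc}
 KH^1(E) & \to & KH^\circ(X) & \to & KH^\circ(X') \oplus KH^\circ(S) & \to & KH^\circ(E) \\
 & & \downarrow \varphi_X & & \downarrow \varphi_{X'} \oplus \varphi_S & & \downarrow \varphi_E \\
 0 & \to & \opk^\circ(X) & \to & \opk^\circ(X') \oplus \opk^\circ(S) & \to & \opk^\circ(E),
\end{array}
\]
where each $\varphi_\bullet$ is the natural comparison map and the top (resp.\ bottom) row is a piece of the cdh (resp.\ Kimura) exact sequence. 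Because $X'$ is smooth, $\varphi_{X'}$ is an isomorphism.

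The proof reduces to showing that $\varphi_S$ and $\varphi_E$ are isomorphisms. Granting this, the surjectivity of $\varphi_X$ is a short diagram chase: for $c \in \opk^\circ(X)$, lift $(c|_{X'}, c|_S)$ uniquely through $\varphi_{X'} \oplus \varphi_S$ to $(\tilde a, \tilde b) \in KH^\circ(X') \oplus KH^\circ(S)$. The two images of $(\tilde a, \tilde b)$ in $\opk^\circ(E)$ agree by the Kimura hypothesis, so since $\varphi_E$ is injective their images in $KH^\circ(E)$ also agree. The cdh-exactness of the top row then produces $\tilde c \in KH^\circ(X)$ mapping to $(\tilde a, \tilde b)$, and left-exactness of the Kimura row forces $\varphi_X(\tilde c) = c$.

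The comparisons for $S$ and $E$ are handled by an auxiliary lemma, proved by nested induction on dimension and number of irreducible components: for every reduced subscheme $Y$ of a toric variety which is a finite union of torus-invariant subvarieties of dimension at most two, $\varphi_Y$ is an isomorphism. The case of a smooth irreducible component is trivial. For a singular irreducible toric surface, take a toric resolution; the singular locus is a finite set of smooth fixed points, and the exceptional divisor is a union of chains of smooth $\PP^1$'s joined at nodes, handled by one further application of the cdh/Kimura comparison (first normalizing the chain into disjoint $\PP^1$'s, then treating the nodes as smooth points). For a reducible union $Y = Y_1 \cup Y_2$, use the abstract blowup square with $\tilde Y = Y_1 \sqcup Y_2$ and $\tilde S$ the preimage of $S = Y_1 \cap Y_2$; this is an envelope because every invariant subvariety of $Y$ lies in some $Y_i$, and together the Kimura and cdh sequences identify both $\opk^\circ(Y)$ and $KH^\circ(Y)$ with $\ker\bigl(\opk^\circ(Y_1) \oplus \opk^\circ(Y_2) \to \opk^\circ(Y_1 \cap Y_2)\bigr)$ once the lemma is known on the smaller pieces.

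The main obstacle is this sub-lemma in the 2-dimensional case, where the exceptional divisor of the 3-fold resolution is a potentially intricate arrangement of smooth toric surfaces meeting along smooth toric curves; one has to check that the nested induction really terminates in smooth pieces and that the comparison of the two exact sequences is sufficiently natural at each step. The essential asymmetry that makes everything work is that cdh-descent supplies a genuinely long-exact sequence for $KH^\circ$, whereas Kimura gives only left-exactness for $\opk^\circ$: the downstairs left-exactness is just enough to pin down the lift, while the extra exactness upstairs guarantees that a compatible pair of lifts actually extends to a class on $X$.
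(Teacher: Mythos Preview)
Your approach is exactly the one the paper sketches: compare the cdh Mayer--Vietoris sequence for $KH$ with the second Kimura sequence for $\opk$, take a toric resolution (which is indeed an envelope, since every cone of the fan contains a cone of the subdivision of the same dimension), and induct through the $\le 2$-dimensional strata.  The diagram chase you wrote is correct, and you have correctly isolated that what is really needed is $\varphi_{X'}$ bijective, $\varphi_S$ surjective, and $\varphi_E$ injective.

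One point deserves more care.  In the inductive lemma you assert that ``the Kimura and cdh sequences identify both $\opk^\circ(Y)$ and $KH^\circ(Y)$ with $\ker\bigl(\cdots\bigr)$.''  For $\opk^\circ$ this is the content of Kimura left-exactness, but for $KH^\circ$ it requires the connecting map $KH^1(E)\to KH^0(Y)$ to vanish, i.e.\ that $KH^1(Y')\oplus KH^1(S) \to KH^1(E)$ is surjective at each inductive step.  You use this implicitly when claiming injectivity of $\varphi_E$ in the main diagram.  For the configurations that actually arise (a chain of $\PP^1$'s over an isolated surface singularity; a union of smooth orbit closures in a smooth toric variety meeting along smooth orbit closures) this surjectivity does hold---for instance, restriction $K_1(\PP^1)\to K_1(\mathrm{pt})$ is split surjective, which handles the curve case, and the surface case reduces to this---but it is a genuine verification, not a formality.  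With that check supplied, your argument goes through and coincides with the proof in \cite{ap}.
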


For general toric varieties, it is proved in \cite{chww} that the map $K_\perf^\circ(X) \to KH^\circ(X)$ is a split surjection.  This, together with Theorem~\ref{t.kronecker}, lets us deduce that $K^\circ_\perf(X)$ is nontrivial for complete toric threefolds; more specifically:

\begin{corollary}[{\cite[Theorem~1.4]{ap}}]\label{c.surjective}
For any complete three-dimensional toric variety, the homomorphism $K^\circ_\perf(X) \to \Hom(K_\circ(X),\ZZ)$ is surjective.
\end{corollary}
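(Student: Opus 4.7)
The plan is to factor the map in the statement through the chain of natural homomorphisms
\[
K^\circ_\perf(X) \to KH^\circ(X) \to \opk^\circ(X) \to \Hom(K_\circ(X),\ZZ)
\]
and check that every arrow is surjective, the last one in fact being an isomorphism. The three arrows are the ones appearing in \eqref{e.ks} (in the non-equivariant case), together with the Kronecker-duality identification coming from the non-equivariant analogue of Theorem~\ref{t.kronecker}. The composite is, by construction, the natural pairing sending a perfect complex $\shfF^\bullet$ to the functional $[\shfG]\mapsto \chi(X,\shfF^\bullet \otimes^L \shfG)$, so it is indeed the homomorphism in the statement.

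First I would invoke the result of Cort\'i\~nas--Haesemeyer--Walker--Weibel \cite{chww}, which states that for any toric variety the map $K^\circ_\perf(X)\to KH^\circ(X)$ is a split surjection; in particular it is surjective here. Next, because $X$ is a three-dimensional toric variety, Theorem~\ref{t.surjective} tells us that $KH^\circ(X)\to \opk^\circ(X)$ is surjective. Finally, toric varieties are $T$-linear, so the non-equivariant version of Theorem~\ref{t.kronecker} identifies $\opk^\circ(X)$ with $\Hom(K_\circ(X),\ZZ)$. Composing the three surjections (with an isomorphism at the end) gives a surjection, which is precisely what is required.

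The only point that deserves a moment of care is that the Kronecker-duality statement of Theorem~\ref{t.kronecker} has a non-equivariant counterpart applicable to this setting; this is a standard consequence of the same proof (using the filtration of a linear variety by affine pieces and Mayer--Vietoris for operational $K$-theory), together with the fact that complete toric varieties are linear. I expect this verification to be the main bookkeeping obstacle, but no new idea is involved: once it is in place, the argument is a three-line concatenation of the results in \cite{chww}, Theorem~\ref{t.surjective}, and the (non-equivariant) Theorem~\ref{t.kronecker}.
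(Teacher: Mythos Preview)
Your proposal is correct and matches the paper's argument exactly: the paper derives the corollary by composing the split surjection $K^\circ_\perf(X)\to KH^\circ(X)$ from \cite{chww}, the surjection $KH^\circ(X)\to\opk^\circ(X)$ from Theorem~\ref{t.surjective}, and the Kronecker-duality isomorphism of Theorem~\ref{t.kronecker} (in its non-equivariant form). Your remark that the non-equivariant Kronecker duality needs to be checked is the only subtlety, and the paper handles it in the same way, by citing \cite[Theorem~6.1]{ap}.
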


\noindent
At the time this was proved, we did not know whether all complete toric varieties admit nontrivial vector bundles, even in dimension three (see \cite{payne-vbs} for a discussion of this question).  Results of Gharib and Karu \cite{gharib-karu} gave conditions on the fan of a toric threefold guaranteeing nontrivial $K^\circ_\perf(X)$ for many such varieties; the corollary above extends this fact to all complete toric threefolds.  Almost concurrently, Perling and Schr\"oer \cite{ps} proved that in fact complete toric threefolds do carry nontrivial vector bundles, but their methods, like ours, seem to run into difficulties in higher dimension.

It would be very interesting to know cdh-descent for equivariant homotopy $K$-theory.  As pointed out in \cite{krishna-ravi}, this would have many applications; one might be the equivariant analogue of Corollary~\ref{c.surjective}.

\section{Riemann-Roch theorems}

As mentioned before, the original motivation for introducing bivariant theories was to unify several Riemann-Roch type theorems.  For this section, we restrict to the category of quasi-projective schemes.  In this context, Fulton \cite[\S18]{fulton-it} showed how to construct natural homomorphisms
\[
  K^\circ(X \to Y) \xrightarrow{\tau} A^*(X \to Y)
\]
which induce commuting squares
\begin{equation}\label{d.verdier}
\begin{diagram}
  K_\circ(X)  & \rTo^\tau & A_*(X)_\QQ \\
   \uTo^{f^!} &   & \uTo_{\td(T_f)\cdot f^!} \\
  K_\circ(Y)  & \rTo^\tau & A_*(Y)_\QQ
\end{diagram}
\end{equation}
when $f$ is smooth, and
\begin{equation}\label{d.sga6}
\begin{diagram}
  K^\circ(X)  & \rTo^\tau & A^*(X)_\QQ \\
   \dTo^{f_!} &   & \dTo_{f_!( \; \cdot \td(T_f))} \\
  K^\circ(Y)  & \rTo^\tau & A^*(Y)_\QQ
\end{diagram}
\end{equation}
when $f$ is smooth and proper.  (We often write $A_*(X)_\QQ$ for $A_*(X)\otimes \QQ$.)  Here $T_f$ is the relative tangent bundle of $f$, and $\td$ is the {\em Todd class}. It can be characterized formally by setting
\[
  \td(L) = \frac{x}{1-\ee^{-x}}
\]
for a line bundle $L$ with first Chern class $x$, and requiring the property $\td(E) = \td(E')\cdot\td(E'')$ whenever $0\to E' \to E \to E'' \to 0$ is an exact sequence of vector bundles.

When $Y$ is a point, the homomorphism $f_!$ is identified with the Euler characteristic, and the second diagram expresses the classical Hirzebruch-Riemann-Roch formula.

The Riemann-Roch transformation $\tau$ was constructed in the equivariant setting by Edidin and Graham \cite{eg-rr}.  Here one must also take certain completions: let $\hat{A}^T_*(X) = \prod_{i\in \ZZ} A^T_i(X)$, and let $\hat{K}^T_\circ(X)$ be the completion with respect to the kernel of the homomorphism $R(T) \to \ZZ$ sending each $\ee^t$ to $1$.

\begin{theorem}[{\cite{eg-rr}}]\label{t.eg}
There is a natural homomorphism $\tau\colon K^T_\circ(X) \to \hat{A}^T_*(X)_\QQ$, which induces an isomorphism $\hat{K}^T_\circ(X)_\QQ \xrightarrow{\sim} \hat{A}^T_*(X)_\QQ$.
\end{theorem}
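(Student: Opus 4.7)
The plan is to define $\tau$ by reducing to the non-equivariant Baum-Fulton-MacPherson transformation on algebraic approximations to the Borel construction, and to obtain the completion isomorphism by passing to inverse limits via an Atiyah-Segal-type comparison on the $K$-theory side. Following Totaro and Edidin-Graham, I would choose a cofinal sequence of $T$-representations $V_i$ with $T$-invariant open subsets $U_i \subset V_i$ on which $T$ acts freely and with $\codim(V_i \setminus U_i) \to \infty$. The mixed quotients $X_i := X \times^T U_i$ are finite-type schemes, each $T$-equivariant coherent sheaf $\shfF$ on $X$ descends uniquely to a coherent sheaf $\shfF_i$ on $X_i$, and by construction
\[
  A^T_j(X) \isom A_{j + d_i}(X_i), \qquad d_i = \dim U_i - \dim T,
\]
for all $j$ in a stable range depending on $\codim(V_i \setminus U_i)$.

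Next, I would set $\tau_i([\shfF]) := \tau([\shfF_i]) \in A_*(X_i)_\QQ$ using Fulton's non-equivariant Riemann-Roch transformation, reinterpret the degree pieces via the identification above, and take the product over $j$ to land in $\hat{A}^T_*(X)_\QQ$. For the resulting map to be well-defined and independent of $i$, one must verify compatibility under the bundle projections $\pi\colon X_{i+1} \to X_i$: since $\pi$ is smooth and its relative tangent bundle is pulled back from the base, the smooth Riemann-Roch square \eqref{d.verdier} shows that the Todd-class correction acts trivially in the stable range, so the $\tau_i$ assemble into a coherent system and produce a well-defined natural transformation $\tau\colon K^T_\circ(X) \to \hat{A}^T_*(X)_\QQ$.

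For the isomorphism $\hat{K}^T_\circ(X)_\QQ \xrightarrow{\sim} \hat{A}^T_*(X)_\QQ$, I would combine the classical non-equivariant theorem $\tau\colon K_\circ(X_i)_\QQ \xrightarrow{\sim} A_*(X_i)_\QQ$ with a comparison at the level of inverse limits. On the Chow side, $\hat{A}^T_*(X)_\QQ = \prod_j A^T_j(X)_\QQ$ is tautologically $\varprojlim_i A_*(X_i)_\QQ$ once the stable-range isomorphisms are used. On the $K$-theory side, the required identification $\hat{K}^T_\circ(X)_\QQ \isom \varprojlim_i K_\circ(X_i)_\QQ$ is an algebraic Atiyah-Segal completion statement, matching the $I$-adic topology at $I = \ker(R(T) \to \ZZ)$ with the topology coming from the tower $(X_i)$; granted this, the non-equivariant Riemann-Roch isomorphisms applied term-by-term and then in the limit yield the theorem.

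The main obstacle will be precisely this last completion step. One has to show that the $I$-adic filtration on $K^T_\circ(X)_\QQ$ is cofinal with the filtration by kernels of restriction to the $X_i$; an Artin-Rees-type argument, using that $R(T) = \ZZ[M]$ is Noetherian and that powers of $I$ correspond to increasing codimensions on the classifying approximations, should suffice. A secondary technical point is that the image of $\tau$ lies in the completion $\hat{A}^T_*(X)_\QQ$ rather than in $A^T_*(X)_\QQ$ itself, because Chern and Todd characters of equivariant bundles mix degrees through the $\Lambda_T$-action; checking convergence in each fixed degree via the stable range above is what keeps the limit meaningful.
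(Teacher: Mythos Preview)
The paper does not prove this theorem; it merely states it and attributes it to Edidin and Graham \cite{eg-rr}, so there is no in-paper argument to compare your proposal against. Your sketch is a reasonable outline of the Edidin--Graham strategy (Borel-construction approximations, non-equivariant Baum--Fulton--MacPherson applied to the mixed quotients, and a completion/Atiyah--Segal step), and that is indeed roughly how the cited reference proceeds; but since the present paper treats Theorem~\ref{t.eg} as a black box, any detailed assessment of your argument would have to be made against \cite{eg-rr} rather than against anything here.
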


This can be extended to the bivariant setting:

\begin{theorem}[{\cite{agp}}]\label{t.brr}
There are natural homomorphisms $\opk_T^\circ(X \to Y) \xrightarrow{t} \hat{A}_T^*(X \to Y)_\QQ$, inducing isomorphisms $\hat{\opk^\circ_T}(X \to Y)_\QQ \xrightarrow{t} \hat{A}_T^*(X \to Y)_\QQ$.  These commute with the forgetful homomorphisms $\opk_T^\circ \to \opk^\circ$ and $A_T^* \to A^*$.
\end{theorem}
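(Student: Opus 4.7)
The strategy is to lift Edidin-Graham's homology Riemann-Roch (Theorem~\ref{t.eg}) to the bivariant level. A class $c \in \opk_T^\circ(X\to Y)$ is by definition a compatible family of $R(T)$-linear operators $c_g : K^T_\circ(Y') \to K^T_\circ(X')$ indexed by arrows $g : Y'\to Y$. Each $c_g$ extends continuously to a map $\hat c_g$ between the rationalized completions, and by Theorem~\ref{t.eg} the Edidin-Graham transformation becomes an isomorphism $\hat\tau : \hat K^T_\circ(\,\cdot\,)_\QQ \xrightarrow{\sim} \hat A^T_*(\,\cdot\,)_\QQ$. I therefore define
\[
t(c)_g \;:=\; \hat\tau_{X'} \circ \hat c_g \circ \hat\tau_{Y'}^{-1},
\]
and compose with the completion map on the source to obtain a natural transformation $\opk_T^\circ(X\to Y) \to \hat A_T^*(X\to Y)_\QQ$.

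To show the family $\{t(c)_g\}$ satisfies the bivariant axioms, compatibility with pushforward, pullback, and composition of operators is formal: conjugation by $\hat\tau$ preserves composition, and the naturality of $\tau$ with respect to proper pushforward and arbitrary pullback on homology (part of Edidin-Graham's theorem) ensures these axioms transfer from $c$ to $t(c)$. The only nontrivial axiom is commutation with Gysin pullbacks along flat morphisms and regular embeddings, and this rests on the Riemann-Roch identity $\tau \circ h^! = \td(N_h)\cdot h^! \circ \tau$ for a regular embedding $h$ with normal bundle $N_h$. Because the Todd factor depends only on $h$ (not on $c$), it appears symmetrically on source and target inside the conjugation defining $t(c)_g$ and cancels, giving the required commutation with the Chow Gysin map. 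For the isomorphism on completions, I use equivariant resolution of singularities (in characteristic zero) to build equivariant envelopes $X''\to X$ and $Y''\to Y$ with smooth source, and apply the second Kimura sequence~\eqref{e.kimura2k} together with its Chow analogue; these fit both sides into compatible four-term exact sequences, and a five-lemma induction on the dimension of the singular locus reduces the claim to the case where $X$ and $Y$ are both smooth. In that case, the bivariant orientation classes extending Proposition~\ref{p.poincare} identify both groups with (Todd-twisted) $\hat K^T_\circ(X)_\QQ$ and $\hat A^T_*(X)_\QQ$ respectively, so the isomorphism is exactly Theorem~\ref{t.eg}. Compatibility with the forgetful maps $\opk_T^\circ \to \opk^\circ$ and $A_T^* \to A^*$ is automatic from the corresponding compatibility of $\tau$.

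The main obstacle is the Gysin-commutation check---tracking Todd contributions from the Edidin-Graham identity through the bivariant conjugation, and accounting for excess normal bundles in fiber squares that are Tor-independent but not transverse. Once those Todd corrections are shown to cancel in the natural way, the rest of the argument is a nontrivial but largely formal d\'evissage to the smooth case via Kimura exact sequences and Poincar\'e duality.
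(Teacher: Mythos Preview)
Your approach is essentially the one the paper sketches: the paper's entire argument here is the sentence ``The proof is mostly formal, building on Theorem~\ref{t.eg},'' and your conjugation-by-$\hat\tau$ construction together with the Kimura/d\'evissage reduction to the smooth case is exactly the kind of formal argument that sentence is pointing to. The one structural difference worth noting is that the paper later remarks that both Theorem~\ref{t.brr} and the bivariant localization theorem ``are deduced from a general statement about transformations of operational bivariant theories''; so in \cite{agp} the authors evidently isolate an abstract lemma (roughly: any natural isomorphism of homology theories compatible with Gysin maps up to a universal twist induces, by conjugation, a transformation of the associated operational bivariant theories) and then specialize it once to $\hat\tau$ and once to the localization isomorphism. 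Your write-up does the Riemann--Roch case by hand rather than via such a meta-lemma, but the content is the same, and your identification of the Gysin/Todd cancellation as the one nonformal step is accurate.
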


\noindent
The proof is mostly formal, building on Theorem~\ref{t.eg}.  In the non-equivariant case, this factors the perfect complex transformation: for quasi-projective schemes, Fulton's argument in fact constructs a transformation $K^\circ \to \opk^\circ$, such that the composition
\[
  K^\circ(X \to Y) \to \opk^\circ(X\to Y) \xrightarrow{t} A^*(X \to Y)_\QQ
\]
is the transformation $\tau$.  It would be interesting to know if this can be extended to general schemes.

The Riemann-Roch theorems can be used to exhibit an example of a $3$-dimensional projective toric variety $X$ such that $K^\circ_T(X) \to K^\circ(X)$ is not surjective.  This should be viewed as contrasting with $K^T_\circ(X) \to K_\circ(X)$, which is surjective for any variety \cite{merkurjev}.

\begin{proposition}[{\cite{agp}}]\label{p.not-surj}
Let $X$ be the toric mirror dual to $(\PP^1)^3$, i.e., corresponding to the fan over the faces of a cube.  Then $K_T^\circ(X) \to K^\circ(X)$ is not surjective.
\end{proposition}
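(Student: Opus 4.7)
My plan is to use the bivariant Riemann-Roch theorem (Theorem~\ref{t.brr}) to translate the $K$-theoretic non-surjectivity into a Chow-cohomological question, and then to exhibit an explicit obstruction arising from the non-simpliciality of the cube fan. First I would compose $t$ with the natural maps $K^\circ_\perf \to \opk^\circ$ (and their $T$-equivariant analogues) to obtain the commutative square
\begin{diagram}
K_T^\circ(X) \otimes \QQ & \rTo & K^\circ(X) \otimes \QQ \\
\dTo^{\tau_T} & & \dTo_{\tau} \\
\hat{A}_T^*(X)_\QQ & \rTo & A^*(X)_\QQ
\end{diagram}
in which $\tau_T$ becomes an isomorphism after completing the upper-left by the augmentation ideal (Theorem~\ref{t.brr}), while completion on the right is trivial since $X$ is complete and $A^*(X)_\QQ$ is finite-dimensional. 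The image of $\hat A_T^*(X)_\QQ \to A^*(X)_\QQ$ therefore coincides with the image of the forgetful map $A_T^*(X)_\QQ \to A^*(X)_\QQ$, so surjectivity of the top row would force the image of $\tau$ to lie inside this image. It thus suffices to produce a class in $A^*(X)_\QQ$ which lies in the image of $\tau$ but not in the image of $A_T^*(X)_\QQ$.

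Next I would invoke the combinatorial descriptions of these Chow groups: after Payne, $A_T^*(X)_\QQ$ is the ring of rational piecewise polynomial functions on the cube fan $\Sigma$, while after Fulton-Sturmfels and Totaro, $A^*(X)_\QQ$ is the space of rational Minkowski weights on $\Sigma$. Because each maximal cone of $\Sigma$ is three-dimensional but spanned by four rays, the restriction of a piecewise polynomial to the four rays of such a cone must satisfy one linear relation; by contrast, Minkowski weights are only subject to the balancing condition at codimension-one faces. This non-simpliciality is precisely what produces the cokernel of the forgetful map.

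I would then write down a specific Minkowski weight on $\Sigma$ witnessing this cokernel, most naturally in codimension one or two, and exploiting the symmetry of the cube to keep the verifications tractable (the idea being to use the extra freedom Minkowski weights have on non-simplicial cones to distribute values so that balancing holds while the compatibility required of a piecewise polynomial fails). To lift such a class to $K^\circ(X)$, I would use that $X$ is projective, so the image of $\tau$ is large: Chern characters of line bundles in $\mathrm{Pic}(X)$ and classes of structure sheaves of invariant subvarieties together supply enough concrete elements to meet the obstruction, producing a perfect complex whose class in $K^\circ(X)$ does not lie in the image of $K_T^\circ(X)$.

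The main obstacle is the explicit combinatorial verification: producing an honest Minkowski weight on the cube fan for which the balancing conditions hold at every codimension-one face but the polynomial-extension conditions on the six non-simplicial maximal cones fail, and then identifying this obstruction on the nose. A subtler secondary difficulty is lifting the Chow-level obstruction to a genuine perfect complex rather than just an element of $\opk^\circ(X)$; here one must explicitly combine line-bundle classes and structure-sheaves of invariant subschemes to realize the target in $A^*(X)_\QQ$.
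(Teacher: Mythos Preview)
Your overall architecture---pass through $\opk^\circ$ and $A^*$ via the bivariant Riemann--Roch isomorphisms and reduce to the non-surjectivity of $A_T^*(X)_\QQ \to A^*(X)_\QQ$---is exactly the paper's strategy.  However, you are missing the two ingredients that make the paper's proof a short diagram chase rather than a long computation.

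First, the non-surjectivity of $A_T^*(X)_\QQ \to A^*(X)_\QQ$ for the cube fan is already established by Katz and Payne \cite{kp}; the paper simply cites it.  You propose to redo this combinatorially by hand.  That is feasible, but unnecessary.

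Second, and more importantly, you identify as the ``subtler secondary difficulty'' the problem of lifting your Chow obstruction class back to an honest element of $K^\circ(X)$.  The paper never faces this difficulty at all, because Theorem~\ref{t.surjective} (together with the split surjection $K^\circ_\perf \to KH^\circ$ for toric varieties) says that $\beta\colon K^\circ(X)\to\opk^\circ(X)$ is surjective for any toric threefold.  Hence the composite $K^\circ(X)_\QQ \to \opk^\circ(X)_\QQ \xrightarrow{\sim} A^*(X)_\QQ$ is automatically surjective, and the diagram chase is immediate: if $K_T^\circ(X)\to K^\circ(X)$ were surjective, so would be $\hat A_T^*(X)_\QQ \to A^*(X)_\QQ$, contradicting \cite{kp}.

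Your proposed workaround for the lifting---``Chern characters of line bundles in $\mathrm{Pic}(X)$ and classes of structure sheaves of invariant subvarieties''---is where your approach would likely stall.  On a singular variety, structure sheaves $\OO_Z$ of subvarieties are classes in $K_\circ(X)$, not in $K^\circ(X)=K^\circ_\perf(X)$; they define perfect complexes only when $Z$ is, say, a local complete intersection in $X$, which for the non-simplicial cube variety is not automatic for the invariant strata you would want to use.  So producing a specific perfect complex hitting a prescribed Minkowski weight is genuinely nontrivial, and Theorem~\ref{t.surjective} is precisely the tool that does this for you in one stroke.
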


\begin{proof}
It is shown in \cite{kp} that $A_T^*(X)_\QQ \to A^*(X)_\QQ$ is not surjective, so neither is $\alpha\colon \hat{A}_T^*(X)_\QQ \to A^*(X)_\QQ$.  Now consider the diagram
\begin{diagram}
  K^\circ_T(X) & \rTo & \opk_T^\circ(X) & \rInto & \hat{\opk_T^\circ}(X)_\QQ &\rTo^\sim & \hat{A}_T^*(X)_\QQ \\
  \dTo^\gamma  &   & \dTo   &      & \dTo   &  & \dTo_\alpha \\
  K^\circ(X) & \rTo^\beta & \opk^\circ(X) & \rInto & \opk^\circ(X)_\QQ &\rTo^\sim & {A}^*(X)_\QQ.
\end{diagram}
The homomorphism $\beta\colon K^\circ(X) \to \opk^\circ(X)$ is surjective by Theorem~\ref{t.surjective}, so a diagram chase shows that $\gamma$ cannot be.
\end{proof}

\section{Localization theorems}

One of the most useful features of equivariant $K$- and Chow theory is the possibility of computing by localizing at $T$-fixed points.  At the foundation of this technique are isomorphisms
\[
  \bar{S}^{-1}A_*^T(X^T) \xrightarrow{\sim} \bar{S}^{-1}A_*^T(X)
\]
and
\[
  S^{-1}K_\circ^T(X^T) \xrightarrow{\sim} S^{-1}K^T_\circ(X),
\]
where $\bar{S} \subseteq \Lambda_T$ is the multiplicative set generated by all nonzero $\lambda\in M$, and $S\subseteq  R(T)$ is generated by elements of the form $1-\ee^{-\lambda}$, for nonzero $\lambda\in M$.  (These isomorphisms were established in \cite[\S2.3, Corollary 2]{brion-chow} and \cite[Th\'eor\`eme~2.1]{thomason-lef}, respectively.)  These can be extended to the bivariant setting:

\begin{theorem}[{\cite{agp}}]
There are natural homomorphisms
\begin{align*}
  \bar{S}^{-1} A_T^*(X \to Y) &\xrightarrow{\loc^A} \bar{S}^{-1} A_T^*(X^T \to Y^T) 
 \quad \text{and}\\
  S^{-1}\opk_T^\circ(X\to Y) &\xrightarrow{\loc^K}  S^{-1} \opk_T^\circ(X^T \to Y^T),  
\end{align*}
inducing isomorphisms of $\bar{S}^{-1}\Lambda_T$-modules and $S^{-1}R(T)$-modules, respectively, and commuting with the basic bivariant operations (product, pushforward, and pullback).
\end{theorem}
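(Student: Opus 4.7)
The plan is to construct $\loc^K$ and its inverse directly from Thomason's isomorphism $S^{-1}K^T_\circ(Z)\isom S^{-1}K^T_\circ(Z^T)$, then verify that the bivariant axioms and the basic operations are preserved. The Chow statement is entirely parallel, with Brion's isomorphism replacing Thomason's; I focus on the $K$-theoretic version.

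Given $c\in \opk_T^\circ(X\to Y)$, I need, for each equivariant morphism $g\colon Y''\to Y^T$, a homomorphism $\loc^K(c)_g\colon S^{-1}K^T_\circ(Y'')\to S^{-1}K^T_\circ(X^T\times_{Y^T}Y'')$. Composing $g$ with the inclusion $Y^T\hookrightarrow Y$, the class $c$ already supplies $c_g\colon K^T_\circ(Y'')\to K^T_\circ(X\times_Y Y'')$. The key observation is that, since $g$ factors through $Y^T$, a functor-of-points check yields the scheme-theoretic identity
\[
  (X\times_Y Y'')^T \;=\; X^T\times_{Y^T}(Y'')^T \;=\; (X^T\times_{Y^T}Y'')^T,
\]
so Thomason's theorem applied to both $X\times_Y Y''$ and $X^T\times_{Y^T} Y''$ identifies their localized $K^T_\circ$ with the common group $S^{-1}K^T_\circ\bigl((X\times_Y Y'')^T\bigr)$. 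Letting $\Phi_{Y''}$ denote the resulting isomorphism $S^{-1}K^T_\circ(X\times_Y Y'')\xrightarrow{\sim} S^{-1}K^T_\circ(X^T\times_{Y^T}Y'')$, I set $\loc^K(c)_g:=\Phi_{Y''}\circ(S^{-1}c_g)$. An inverse is constructed dually: given $c'\in S^{-1}\opk_T^\circ(X^T\to Y^T)$ and $g\colon Y''\to Y$, take fixed points to produce $g^T\colon (Y'')^T\to Y^T$, apply $c'$, and use Thomason's isomorphism on source and target to land in $S^{-1}K^T_\circ(X\times_Y Y'')$. The two constructions are mutually inverse by naturality of $\Phi$.

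The remaining bulk of the work is to verify that $\loc^K(c)$ satisfies the bivariant axioms (compatibility with proper pushforward, arbitrary pullback, and commutation with the Gysin pullbacks $h^!$ of Figure~\ref{f.diags}), and that $\loc^K$ itself intertwines the product, pushforward, and pullback operations on the two bivariant theories. Each of these reduces to the corresponding property for $c$ combined with naturality of Thomason's isomorphism with respect to the operation in question, and the $S^{-1}R(T)$-linearity is automatic from the construction.

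The main obstacle I anticipate is the Gysin-compatibility axiom for a \emph{regular embedding} $h\colon W\hookrightarrow Z$: the refined pullback $h^!$ is defined via higher $\Tor$, and a priori it need not commute with restriction to $T$-fixed loci. One handles this either by invoking the fact, implicit in \cite{thomason-lef} and made explicit in \cite{krishna-ravi}, that Thomason's localization intertwines refined Gysin pullbacks, or by deformation to the normal cone to reduce to the flat case in which the compatibility is elementary. Once this point is settled, all the other bivariant compatibilities follow formally.
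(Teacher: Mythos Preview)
Your approach is essentially the one the paper indicates: the only hint given in the text is that this localization theorem (like the operational Riemann--Roch of Theorem~\ref{t.brr}) is ``deduced from a general statement about transformations of operational bivariant theories,'' and what you have written is precisely the concrete unwinding of such a transformation in the case at hand, using Thomason's (resp.\ Brion's) homology-level localization as input. So at the level of strategy you are in agreement with \cite{agp}. The difference is one of packaging: rather than proving localization and Riemann--Roch separately, the cited work isolates the common mechanism---a natural transformation of the underlying homology theories, compatible with proper pushforward and refined Gysin maps, induces a transformation of the associated operational theories, and a natural isomorphism on homology yields an isomorphism operationally. Your direct argument buys concreteness; the abstract framework buys a single proof covering both theorems and makes the formal parallel with Riemann--Roch explicit (cf.\ the remark about \cite{bfq}).

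One point deserves more care than you give it. Your construction of $\loc^K(c)$ produces, for each test morphism $g\colon Y''\to Y^T$, an $S^{-1}R(T)$-linear map on \emph{localized} groups, so what you have really built is a class in the operational theory associated to the homology functor $Z\mapsto S^{-1}K^T_\circ(Z)$. It is not automatic that this coincides with $S^{-1}\opk_T^\circ(X^T\to Y^T)$: localization need not commute with the inverse limit implicit in the operational construction, and the denominators appearing in your isomorphisms $\Phi_{Y''}$ depend on $Y''$ with no a priori uniform bound. The general framework sidesteps this by working throughout with the operational theory built on the already-localized homology (which is itself a legitimate bivariant theory) and only afterwards comparing with $S^{-1}\opk_T^\circ$; you should either do the same or argue directly that the two agree. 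Your identification of Gysin-compatibility for regular embeddings as the main remaining technical point is correct, and both routes you propose (appeal to \cite{thomason-lef} or deformation to the normal cone) are viable.
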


This bivariant version of the localization theorem is formally similar to the Riemann-Roch theorem (Theorem~\ref{t.brr}); in fact, both are deduced from a general statement about transformations of operational bivariant theories.  Origins of this formal similarity can be found in the Lefschetz-Riemann-Roch theorem of Baum, Fulton, and Quart \cite{bfq}.

The Riemann-Roch formulas expressed by diagrams \eqref{d.verdier} and \eqref{d.sga6} have localization analogues.  Suppose $f\colon X \to Y$ is a proper flat equivariant map, with the induced map $\bar{f}\colon X^T \to Y^T$ also flat.  In $K$-theory, one asks for classes $\eem^K(f)\in\opk_T^\circ(X^T)$ making the diagrams
\begin{equation}\label{d.loc1}
\begin{diagram}
  S^{-1}K^T_\circ(X)  & \rTo^{\loc^K} & S^{-1}K^T_\circ(X^T) \\
   \uTo^{f^!} &   & \uTo_{\eem^K(f)\cdot \bar{f}^!} \\
 S^{-1} K^T_\circ(Y)  & \rTo^{\loc^K} & S^{-1}K^T_\circ(Y^T)
\end{diagram}
\end{equation}
and
\begin{equation}\label{d.loc2}
\begin{diagram}
  S^{-1}\opk_T^\circ(X)  & \rTo^{\loc^K} & S^{-1}\opk_T^\circ(X^T) \\
   \dTo^{f_!} &   & \dTo_{\bar{f}_!( \; \cdot \eem^K(f))} \\
  S^{-1}\opk_T^\circ(Y)  & \rTo^{\loc^K} & S^{-1}\opk_T^\circ(Y^T),
\end{diagram}
\end{equation}
commute.  (In Chow theory, one has the corresponding problem of finding classes $\eem^A(f)\in A_T^*(X^T)$.)

\begin{theorem}[{\cite{agp}}]
In the above setting, if $\bar{f}\colon X^T \to Y^T$ is smooth, there exist unique classes $\eem^K(f)$ and $\eem^A(f)$ fitting into commutative diagrams as depicted.
\end{theorem}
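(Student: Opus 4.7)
The plan is to produce $\eem^K(f)$ and $\eem^A(f)$ as the images of the bivariant orientation classes of $f$ under the localization transformations of the preceding theorem; once this is done, the commutativity of diagrams \eqref{d.loc1} and \eqref{d.loc2} becomes a formal consequence of the compatibility of $\loc^K$ (resp.\ $\loc^A$) with bivariant product, pushforward, and pullback.  I will describe the $K$-theoretic case; the Chow case is identical, with $\opk_T^\circ$, $S$, and $\loc^K$ replaced by $A_T^*$, $\bar S$, and $\loc^A$.

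Because $f$ is proper and flat, it carries a canonical orientation class $[f]\in\opk_T^\circ(X\to Y)$, characterized by $f^!(y)=[f]\cdot y$ for $y\in K^T_\circ(Y)$ and $f_!(x)=f_*(x\cdot[f])$ for $x\in\opk_T^\circ(X)$.  Applying the bivariant localization map produces $\loc^K([f])\in S^{-1}\opk_T^\circ(X^T\to Y^T)$.  Since $\bar f$ is smooth, the distinguished orientation class $[\bar f]$ discussed after Proposition~\ref{p.poincare} yields an isomorphism $\opk_T^\circ(X^T)\xrightarrow{\;\cdot[\bar f]\;}\opk_T^\circ(X^T\to Y^T)$, and this remains an isomorphism after inverting $S$.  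I therefore define $\eem^K(f)\in S^{-1}\opk_T^\circ(X^T)$ to be the unique class satisfying
\[
  \loc^K([f]) = \eem^K(f)\cdot[\bar f];
\]
the uniqueness here supplies the uniqueness asserted in the statement.

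With $\eem^K(f)$ in hand, diagrams \eqref{d.loc1} and \eqref{d.loc2} unwind formally.  For $y\in K^T_\circ(Y)$, expand $\loc^K(f^! y)=\loc^K([f]\cdot y)$ using compatibility of $\loc^K$ with bivariant product and substitute the defining equation for $\eem^K(f)$; this rewrites the right-hand side as $\eem^K(f)\cdot\bar f^!(\loc^K(y))$, giving \eqref{d.loc1}.  For $x\in\opk_T^\circ(X)$, expand $\loc^K(f_! x)=\loc^K(f_*(x\cdot[f]))$ using compatibility of $\loc^K$ with both pushforward and product, substitute again, and recognize the result as $\bar f_!(\loc^K(x)\cdot\eem^K(f))$, giving \eqref{d.loc2}.

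Essentially all of the work is concentrated in the preceding theorem; what remains is a formal argument.  The one technical point that must be verified carefully is that multiplication by the smooth orientation class $[\bar f]$ really does induce an isomorphism $\opk_T^\circ(X^T)\xrightarrow{\sim}\opk_T^\circ(X^T\to Y^T)$ surviving localization at $S$.  This is precisely why the smoothness hypothesis on $\bar f$ is needed, and absent it, one cannot single out a canonical correction class $\eem^K(f)$ in operational $K$-theory by this route.
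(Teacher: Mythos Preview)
The paper does not actually supply a proof of this theorem; it is stated as a result of \cite{agp}, with only the surrounding discussion indicating how one should think about it (namely, that the bivariant localization theorem plays the role of the Riemann-Roch transformation, and the equivariant multiplicity plays the role of the Todd class).  Your argument is exactly the one this analogy suggests, and it is correct in outline: take the flat orientation class $[f]\in\opk_T^\circ(X\to Y)$, push it through $\loc^K$, and use the smoothness of $\bar f$ to factor $\loc^K([f])$ uniquely as $\eem^K(f)\cdot[\bar f]$; compatibility of $\loc^K$ with product and pushforward then yields the two diagrams.  This is almost certainly the argument in \cite{agp} as well.

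One point deserves a little more care.  You assert that ``the uniqueness here supplies the uniqueness asserted in the statement,'' but strictly speaking the theorem claims uniqueness among classes making the \emph{diagrams} commute, not among classes satisfying the bivariant equation $\loc^K([f])=\eem\cdot[\bar f]$.  To close this, observe that diagram~\eqref{d.loc1} with $y=[\OO_Y]$ forces $\eem\cdot[\OO_{X^T}]$ to equal a fixed element of $S^{-1}K^T_\circ(X^T)$; more generally, running the same argument after an arbitrary base change $Y'\to Y^T$ (which is permitted because operational classes are collections of operators indexed by such base changes, and $\loc^K$ is compatible with pullback) pins down $\eem\cdot[\bar f]$ as a bivariant class, after which your isomorphism $\cdot[\bar f]$ finishes the job.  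This is routine, but worth saying explicitly.
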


The classes $\eem^K(f)$ and $\eem^A(f)$ are called ($K$- or Chow-theoretic) {\em total equivariant multiplicities} of $f$.  To justify the name, let us consider the case where $Y=\pt$ and $X^T$ consists of finitely many {\it nondegenerate} fixed points, meaning that for each $p\in X^T$, the zero character does not occur among the weights of the $T$-action on $T_pX$.  (In this situation, the scheme-theoretic fixed locus $X^T$ is reduced, so $\bar{f}$ is  smooth; see, e.g., \cite[Proposition~A.8.10(2)]{cgp}.)  Let us write $\eem_p^K(X)$ and $\eem_p^A(X)$ for the restrictions of the total multiplicities to $S^{-1}R(T)=S^{-1}\opk_T^\circ(p)$ and $\bar{S}^{-1}\Lambda_T = \bar{S}^{-1}A_T^*(p)$, respectively.

\begin{proposition}[{\cite{agp}}]
Let $\lambda_1,\ldots,\lambda_n$ be the weights of $T$ acting on the Zariski tangent space $T_pX$, and let $C=C_pX \subseteq T_pX$ be the tangent cone.  Then
\[
  \eem^K_p(X) = \frac{[\OO_C]}{(1-\ee^{-\lambda_1})\cdots(1-\ee^{-\lambda_n})} \quad \text{ and } \quad \eem^A_p(X) = \frac{[C]}{\lambda_1 \cdots \lambda_n},
\]
as elements of $S^{-1}R(T) = S^{-1}K_T^\circ(T_pX)$ and $\bar{S}^{-1}\Lambda_T = \bar{S}^{-1}A_T^*(T_pX)$, respectively.

In particular, $\eem^A_p(X)$ is the Brion-Rossmann equivariant multiplicity of $X$ at $p$ \cite{brion-chow}.
\end{proposition}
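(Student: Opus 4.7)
The plan is to unravel the defining diagram for $\eem^K(f)$ with $Y=\pt$, reduce the computation to a $T$-invariant affine neighborhood of $p$ embedded in $T_pX$, and then deform to the tangent cone. I describe the $K$-theoretic case; the Chow case is entirely parallel.

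\textit{Step 1 (unpacking).} Applying diagram \eqref{d.loc1} to $1\in R(T)=K^T_\circ(\pt)$ and using flatness of $f$ to identify $f^!(1)$ with $[\OO_X]$, one obtains
\[
  \loc^K([\OO_X]) \;=\; \sum_{q \in X^T} \eem^K_q(X)\cdot [\OO_q] \quad\text{in}\quad S^{-1}K^T_\circ(X^T) = \bigoplus_{q}S^{-1}R(T).
\]
So $\eem^K_p(X)$ is precisely the $p$-component of $\loc^K([\OO_X])$, and the task reduces to evaluating Thomason's localization isomorphism on the structure sheaf.

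\textit{Step 2 (local reduction and linearization).} By naturality of $\loc^K$ under restriction to invariant opens, the $p$-component depends only on the class of $[\OO_U]$ for a $T$-invariant affine open neighborhood $U \ni p$ with $U^T=\{p\}$. Combining Sumihiro's theorem with the nondegeneracy of the weights at $p$, I pick a $T$-equivariant closed embedding $\iota\colon U\hookrightarrow T_pX$ into the tangent space as a linear representation. Since $\iota$ is proper and induces the bijection $\{p\} \xrightarrow{\sim} \{0\}$ on fixed loci, compatibility of $\loc^K$ with pushforward gives
\[
  \eem^K_p(X) \;=\; \loc^K(\iota_*[\OO_U])\big|_0 \;\in\; S^{-1}K^T_\circ(T_pX)=S^{-1}R(T).
\]

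\textit{Step 3 (deformation and Koszul computation).} The deformation of $U$ to its tangent cone produces a $T\times\GG_m$-equivariant flat family $\widetilde{U}\subseteq T_pX\times\AA^1$, whose fiber over $1$ is $\iota(U)$ and whose fiber over $0$ is $C_pX$. Since the base $\AA^1$ has trivial $T$-action, homotopy invariance $K^T_\circ(T_pX\times\AA^1)\cong K^T_\circ(T_pX)$ forces the two flat fibers to have equal classes, whence $[\iota_*\OO_U]=[\OO_{C_pX}]$ in $K^T_\circ(T_pX)$. The Koszul resolution of the origin in $T_pX$ gives $[\OO_{\{0\}}] = \prod_i(1-\ee^{-\lambda_i})$ under the canonical identification $K^T_\circ(T_pX)\cong R(T)$ sending $[\OO_{T_pX}]\mapsto 1$; since $\loc^K$ is inverse to pushforward from the origin, it acts as multiplication by $1/\prod_i(1-\ee^{-\lambda_i})$ on this copy of $R(T)$. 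Combining these two facts produces the asserted formula for $\eem^K_p(X)$.

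\textit{Chow version and main obstacle.} For $\eem^A_p(X)$ the argument is identical, with the Koszul resolution replaced by the self-intersection identity $[\{0\}] = (\prod_i\lambda_i)\cdot[T_pX]$ in $A^T_*(T_pX)$; the resulting formula $[C]/\prod_i\lambda_i$ then matches the Brion--Rossmann multiplicity essentially by definition. The principal obstacle is Step 2: producing an algebraic $T$-equivariant closed embedding of a neighborhood of $p$ into $T_pX$. This is where the nondegeneracy hypothesis is essential---without it, zero weights appear and the formal linearization need not algebraize, and the identification of $[\OO_C]$ as an element of $R(T)$ would require a different choice of ambient smooth $T$-variety. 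The rest of the argument (tangent-cone deformation, Koszul/self-intersection computation, pushforward compatibility) is a standard package that runs without further difficulty.
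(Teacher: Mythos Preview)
The paper does not supply a proof of this proposition; it is simply stated with a citation to \cite{agp}, so there is nothing in the text to compare against.  Your overall strategy---reduce to a neighborhood of $p$, embed linearly, deform to the tangent cone, and read off the class via Koszul---is the standard one (it is essentially what Brion does in \cite{brion-chow} for $\eem^A$), and Steps~1 and~3 are handled correctly.

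The genuine gap is Step~2.  Sumihiro's theorem requires $X$ to be normal, and the proposition makes no such hypothesis.  This is not a technicality you can wave away: take $X$ to be the nodal rational curve obtained by gluing $0$ and $\infty$ in $\PP^1$, with the $\GG_m$-action descended from the standard one.  The node $p$ is an isolated nondegenerate fixed point with tangent weights $+1,-1$, yet the only $T$-invariant opens of $X$ are $\emptyset$, $X\setminus\{p\}\cong\GG_m$, and $X$; none is an affine neighborhood of $p$.  So your ``pick an invariant affine $U$ and embed it in $T_pX$'' step simply has no $U$ to start from.  (The formula is still correct here: $[\OO_C]=1-\ee^{-\lambda_1-\lambda_2}=0$, matching $\chi_T(\OO_X)=1-1=0$.)  A secondary issue: even when an invariant affine $U$ does exist, you assert an equivariant closed embedding $U\hookrightarrow T_pX$ without justification; one can always embed $U$ in \emph{some} representation $V$, but arranging $V=T_pX$ is not automatic when the weights do not all lie in an open half-space.

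Both issues have the same clean repair.  Embed into a possibly larger representation $V$ (or, in the non-normal case, run the deformation to the tangent cone intrinsically and push forward from $C_pX\subseteq T_pX$ directly), compute there, and then note that since $C_pX\subseteq T_pX\subseteq V$, the pushforward $K^T_\circ(T_pX)\to K^T_\circ(V)$ multiplies by $\prod_{\mu}(1-\ee^{-\mu})$ over the excess weights $\mu$ of $V/T_pX$; these factors cancel against the corresponding factors in the denominator, leaving exactly the stated expression.  The remainder of your argument then goes through unchanged.
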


Continuing this basic setup, so $Y=\pt$ and all fixed points of $X$ are nondegenerate, diagram \eqref{d.loc2} expresses an Atiyah-Bott-type localization formula:

\begin{corollary}
Given a $T$-equivariant vector bundle $E$ of rank $r$ on $X$, its equivariant Euler characteristic may be computed as
\[
  \chi_T(E) = \sum_{p\in X^T}  (\ee^{\chi_1(p)}+\cdots+\ee^{\chi_r(p)}) \cdot \eem_p^K(X),
\]
where the fiber of $E$ is a $T$-representation with weights $\chi_1(p),\ldots,\chi_r(p)$.
\end{corollary}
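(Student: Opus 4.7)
The plan is to apply the bivariant localization square \eqref{d.loc2} of the preceding theorem to the structure map $f\colon X\to\pt$, evaluated at the class $[E]\in\opk_T^\circ(X)$. Since $X$ is complete, $f$ is proper; it is automatically flat over $\Spec \kk$; and the nondegeneracy hypothesis guarantees that $X^T$ is a finite disjoint union of reduced points, so $\bar f\colon X^T\to\pt$ is smooth. Hence the theorem produces a total equivariant multiplicity class $\eem^K(f)\in\opk_T^\circ(X^T)$ whose component at each $p\in X^T$ is $\eem_p^K(X)$, by the preceding proposition.

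Next, I identify both sides of \eqref{d.loc2} when applied to $[E]$. On the left-hand column, $f_!\colon\opk_T^\circ(X)\to\opk_T^\circ(\pt)=R(T)$ sends a vector-bundle class to the equivariant Euler characteristic, just as in the Riemann-Roch setup where pushforward to a point in bivariant $K$-theory is identified with $\chi$. On the right-hand column, the restriction of $\loc^K[E]$ to a fixed point $p$ is the class of the fiber $E_p\in R(T)$, which by hypothesis equals $\sum_{i=1}^{r}\ee^{\chi_i(p)}$. Multiplying by $\eem^K(f)$ gives $\bigl(\sum_i\ee^{\chi_i(p)}\bigr)\cdot\eem_p^K(X)$ at each fixed point, and since $X^T$ is a disjoint union of points the pushforward $\bar f_!$ is simply the sum over $p\in X^T$.

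Substituting these identifications into the commutative square \eqref{d.loc2} yields the stated formula as an equality in $S^{-1}R(T)$. Since the left-hand side $\chi_T(E)$ already lies in $R(T)$, the identity forces the individually fractional terms on the right to combine into an element of $R(T)$.

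The main obstacle is the compatibility $f_![E]=\chi_T(E)$: one must check that the operational bivariant pushforward to a point agrees with the ordinary equivariant Euler characteristic on vector-bundle classes. This should follow from the construction of $f_!$ via multiplication by the orientation class of the proper flat map $f$ together with the fact that, for a vector bundle, the operational class $[E]\in\opk_T^\circ(X)$ acts on $K^T_\circ(X)$ by tensor product and sends $[\OO_X]$ to $[E]$, which under the proper pushforward $K^T_\circ(X)\to R(T)$ is precisely $\chi_T(E)$. Once this compatibility is verified, the corollary is a direct consequence of the bivariant localization theorem.
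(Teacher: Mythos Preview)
Your proposal is correct and follows essentially the same approach as the paper: the paper presents this corollary as a direct reading of diagram~\eqref{d.loc2} in the special case $Y=\pt$ with finitely many nondegenerate fixed points, and your write-up simply unpacks that reading. Your additional remarks on verifying $f_![E]=\chi_T(E)$ are reasonable details that the paper leaves implicit.
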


\noindent
The computation done in the introduction is a simple special case.

The total equivariant multiplicities play a role in localization analogous to that of the Todd class in Riemann-Roch.  In fact, they are directly related, at least in the case where $X$ and $Y$ are smooth, with finitely many fixed points: for each $p\in X^T$, one has
\[
  \td(T_f)|_{p} = \frac{\eem^K_p(f)}{\eem^A_p(f)}
\]
in an appropriate localization of $\hat{R(T)} = \hat\Lambda_T$.

\section{Other directions}

%
\subsection{Chang-Skjelbred and GKM theorems}

Part of the initial motivation for introducing operational $K$-theory was to find a geometric interpretation for the ring of {\em piecewise exponential functions} on the fan $\Delta$ associated to a toric variety $X=X(\Delta)$.  An example of such a function is shown in Figure~\ref{f.p112}; the  fan has three maximal (two-dimensional) cones, with the southwest-pointing ray passing through $(-1,-2)$.  The corresponding toric variety is the weighted projective space $\PP(1,1,2)$.

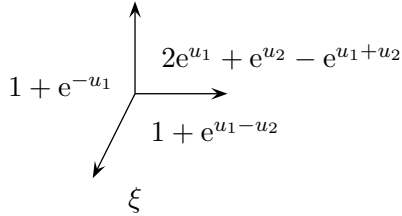
\begin{figure}
\begin{pspicture}(-35,-50)(100,50)

\psline{->}(0,0)(35,0)
\psline{->}(0,0)(0,35)
\psline{->}(0,0)(-16,-32)

\rput[l](10,14){$2\ee^{u_1}+\ee^{u_2}-\ee^{u_1+u_2}$}
\rput[l](6,-14){$1+\ee^{u_1-u_2}$}
\rput[r](-8,3){$1+\ee^{-u_1}$}

\rput(0,-40){$\xi$}

\end{pspicture}
\caption{A piecewise exponential function for $\PP(1,1,2)$ \label{f.p112}}
\end{figure}

In \cite{vv}, it was shown that when $X=X(\Delta)$ is nonsingular, $K_T^\circ(X)$ is isomorphic to the ring $\PExp(\Delta)$. (A similar result was proved in \cite{bv} for more general simplicial toric varieties, taking rational coefficients.  For equivariant Chow cohomology, analogous results were proved by Brion in the smooth case \cite{brion-chow}, and Payne in the general case \cite{payne-chow}.)  For general $X(\Delta)$, one should take operational $K$-theory on the geometric side: there is always an isomorphism $\opk_T^\circ(X) \isom \PExp(\Delta)$ \cite[Theorem~1.6]{ap}.  A similar result was proved by Harada-Holm-Ray-Williams for the topological equivariant $K$-theory and cobordism rings of weighted projective spaces, under some conditions on the weights \cite{hhrw}.

This can be regarded as an instance of a Chang-Skjelbred or GKM-type theorem.  Over fields of characteristic zero, R.~Gonzales has shown that this phenomenon is quite general in operational $K$-theory.  A {\em $T$-skeletal variety} is one such that both $X^T$ and the set of one-dimensional $T$-orbits are finite.

\begin{theorem}[{\cite[Theorem~5.4]{gonzales}}]
For a complete $T$-skeletal variety $X$, $\opk_T^\circ(X) \to \opk_T^\circ(X^T)$ induces an isomorphism onto the subring
\[
  \PExp(X) := \{ (f_p)_{p\in X^T} \,|\, f_p-f_q \text{ is divisible by } (1-\ee^{\chi_{p,q}})\};
\]
here $\chi_{p,q}$ is the character of the one-dimensional orbit connecting fixed points $p$ and $q$.
\end{theorem}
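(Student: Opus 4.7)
The plan is to show that $\rho\colon \opk_T^\circ(X) \to \opk_T^\circ(X^T) = \prod_{p\in X^T} R(T)$ is injective and has image exactly $\PExp(X)$. For injectivity, I would apply the bivariant localization theorem with target $\pt$, giving an isomorphism $S^{-1}\opk_T^\circ(X) \xrightarrow{\sim} S^{-1}\opk_T^\circ(X^T)$. Since any $\alpha \in \ker(\rho)$ automatically vanishes after $S$-localization, the task reduces to showing $\opk_T^\circ(X)$ is $S$-torsion-free as an $R(T)$-module. Complete $T$-skeletal varieties are $T$-linear (a $T$-stable stratification can be assembled from the fixed points and one-dimensional orbits together with their degenerations), so Theorem~\ref{t.kronecker} identifies $\opk_T^\circ(X) \cong \Hom_{R(T)}(K^T_\circ(X), R(T))$, an $R(T)$-dual and therefore torsion-free.

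To establish $\rho(\opk_T^\circ(X)) \subseteq \PExp(X)$, I would verify the divisibility curve by curve. Let $C$ be the closure of a one-dimensional $T$-orbit joining fixed points $p$ and $q$ with weight $\chi = \chi_{p,q}$, and let $\tilde C \to C$ be its normalization---a $T$-equivariant $\PP^1$ with two fixed points mapping to $p$ and $q$. Pulling back through $\opk_T^\circ(X) \to \opk_T^\circ(\tilde C) = K_T^\circ(\PP^1)$ (the equality via Proposition~\ref{p.poincare}) reduces the claim to the classical computation that the restriction $K_T^\circ(\PP^1) \to R(T) \oplus R(T)$ has image exactly $\{(f,g) : f - g \in (1 - \ee^\chi)R(T)\}$.

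The main step, and the principal obstacle, is surjectivity onto $\PExp(X)$. Working in characteristic zero, I would choose an equivariant resolution $\pi\colon X' \to X$ by a smooth $T$-skeletal $X'$, arranged so that $\pi$ is an equivariant envelope. On the smooth side, Proposition~\ref{p.poincare} together with the classical smooth GKM theorem identifies $\opk_T^\circ(X') \cong K_T^\circ(X') \cong \PExp(X')$. The second Kimura sequence~\eqref{e.kimura2k}, applied with $Z \subseteq X$ the singular locus and $E = \pi^{-1}(Z)$, then provides the framework: given $(f_p) \in \PExp(X)$, lift it to a class in $\PExp(X')$ via the surjection $(X')^T \to X^T$ supplied by the envelope property, inductively (on $\dim X$) construct a compatible class in $\opk_T^\circ(Z)$, and verify that these agree on $E$, yielding the desired preimage in $\opk_T^\circ(X)$. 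The hard part is showing that the ``old'' GKM relations on $X$ encode exactly the compatibility on $E$ demanded by the Kimura sequence: one must track the one-dimensional orbits of $X'$ that map isomorphically onto orbits of $X$ (inheriting the prescribed divisibilities) versus those supported in $E$ (which contribute new conditions to be matched by the inductive class on $Z$), and check that the lift to $X'$ is independent of the auxiliary choice of resolution. This enumerative bookkeeping of orbits through the resolution is the technical crux of the proof.
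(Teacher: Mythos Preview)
The paper does not prove this theorem; it is quoted as a result of Gonzales, with only a citation to \cite[Theorem~5.4]{gonzales}. The one hint the paper does give is the sentence immediately following: injectivity of $\opk_T^\circ(X)\to\opk_T^\circ(X^T)$ holds for \emph{every} complete $T$-variety \cite[Proposition~3.7]{gonzales}, not just $T$-skeletal or $T$-linear ones. So there is no in-paper argument to compare your proposal against.

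That said, your proposal has a real gap. Your injectivity step hinges on the assertion that complete $T$-skeletal varieties are $T$-linear, justified by ``a $T$-stable stratification can be assembled from the fixed points and one-dimensional orbits together with their degenerations.'' But the $T$-skeletal hypothesis constrains only the $0$- and $1$-dimensional orbits; orbits of dimension $\geq 2$ are entirely unrestricted, and the inductive definition of $T$-linearity (building from affine spaces via complements of invariant subvarieties) does not follow from finiteness of low-dimensional orbits. Without $T$-linearity, Theorem~\ref{t.kronecker} is unavailable and your route to $S$-torsion-freeness collapses. The fact that Gonzales obtains injectivity for arbitrary complete $T$-varieties indicates a different mechanism---one that does not pass through Kronecker duality.

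Your surjectivity outline has a parallel weakness. You posit an equivariant resolution $X'\to X$ with $X'$ smooth and $T$-skeletal, then invoke ``the classical smooth GKM theorem'' to identify $K_T^\circ(X')\cong\PExp(X')$. Neither step is automatic: an equivariant resolution of a $T$-skeletal variety need not be $T$-skeletal (the exceptional divisor can support positive-dimensional families of one-dimensional orbits), and the smooth GKM description generally requires hypotheses beyond skeletality, such as equivariant formality. The Kimura-sequence induction you sketch is the right shape, but the base case on $X'$ is not secured.
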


Furthermore, if $X$ is any complete $T$-variety, the restriction homomorphism $\opk_T^\circ(X) \to \opk_T^\circ(X^T)$ is injective \cite[Proposition~3.7]{gonzales}.

\subsection{Bivariant algebraic cobordism}

The algebraic cobordism theory $\Omega_*(X)$ of Levine and Morel acts as a covariant ``homology'' theory with respect to proper maps.  J.~Gonz\'alez and K.~Karu have defined a corresponding equivariant operational bivariant theory, $\Omega_T^*$, developed its properties in order to compute for toric varieties: $\Omega_T^*(X(\Delta))$ is isomorphic to a ring of piecewise graded power series on $\Delta$ \cite[Theorem~7.3]{gk}.

It would be interesting to know more about the relation of $\Omega_T^*$ with $\opk_T^\circ$ and $A_T^*$; for instance, one might look for a Riemann-Roch type transformation from $\Omega_T^*$ to all other such bivariant theories.



\end{document}